\setlist{noitemsep}
\setlist[enumerate,1]{label=(\arabic*)}
\newtheorem{theorem}{Theorem}
\newtheorem{obs}[theorem]{Observation}
\newtheorem{lemma}[theorem]{Lemma}
\newtheorem{claim}{Claim}
\newcommand*{\claimproof}{Proof of the Claim}
\newenvironment{proofcl}[1][\claimproof]{\begin{proof}[#1]
}{\end{proof}}
\newcommand{\claw}{K_{1,3}}
\newcommand{\bullp}{B_{1,p}}
\newcommand{\sm}{\setminus}
\newcommand{\FF}{\mathcal F}
\newcommand{\GG}{\mathcal G}
\newcommand{\HH}{\mathcal H}
\newcommand{\XX}{\mathcal X}
\title{Forbidden induced subgraphs for perfectness of claw-free graphs 
       of independence number at least~4}
\author{Christoph Brause$^{1}$     \and 
        Trung Duy Doan$^{2}$      \and
        P\v{r}emysl Holub$^{3}$  \and
        Adam Kabela$^{3}$   \and
        Zden\v{e}k Ryj\'a\v{c}ek$^{3}$  \and
        Ingo Schiermeyer$^{1}$ \and
        Petr Vr\'ana$^{3}$ \\ }
\date{}
\begin{document}
\maketitle
\footnotetext[1]{Institut f\"{u}r Diskrete Mathematik und Algebra, 
    Technische Universit\"{a}t Bergakademie Freiberg, Germany.
    E-mails: brause@math.tu-freiberg.de, Ingo.Schiermeyer@tu-freiberg.de.}
\footnotetext[2]{School of Applied Mathematics and Informatics
	Hanoi, University of Science and Technology, Vietnam.
    E-mail: trungdoanduy@gmail.com.}
\footnotetext[3]{Department of Mathematics and European Centre of Excellence NTIS,
    University of West Bohemia, Czech Republic.
    E-mails: \{holubpre, kabela, ryjacek, vranap\}@kma.zcu.cz.}
\begin{abstract}
For every graph $X$, we consider the class of all connected $\{\claw,X\}$-free 
graphs which are distinct from an odd cycle and have independence number at 
least $4$, and we show that all graphs in the class are perfect if and only if
$X$ is an induced subgraph of some of
$P_6$, $K_1 \cup P_5$, $2P_3$, $Z_2$ or $K_1 \cup Z_1$.
Furthermore, for $X$ chosen as $2K_1 \cup K_3$, we list all eight imperfect 
graphs belonging to the class; and for every other choice of $X$, we show that 
there are infinitely many such graphs.
In addition, for $X$ chosen as $B_{1,2}$, we describe the structure of all 
imperfect graphs in the class.

\medskip
\noindent 
{\bf Keywords:} perfect graphs, vertex colouring, forbidden induced subgraphs
\smallskip
\noindent
{\bf AMS Subject Classification:} 05C15, 05C17
\end{abstract}
%

\section{Introduction}
\label{sec:intro}

We consider finite, simple, undirected graphs, and we refer to \cite{BM08} 
for terminology and notation not defined here. 
We let $N(x)$ denote the set of all vertices adjacent to vertex $x$ in a given graph.
Considering a graph $G$ and a set $S$ of its vertices,
we recall that a subgraph of $G$ 
\emph{induced by $S$} is simply the graph obtained from $G$
by removing all vertices of $V(G) \sm S$.
We say that a graph $H$ is an \emph{induced subgraph} of $G$ if
there is a set of vertices of $G$ which induces a graph isomorphic to $H$.
Given a family $\HH$ of graphs and a graph $G$, we say that $G$ is 
\emph{$\HH$-free} if $G$ contains no graph from $\HH$ as an induced subgraph.
In this context, the graphs of $\HH$ are referred to as 
\emph{forbidden subgraphs}. 
We emphasise that the studied forbidden subgraphs are not necessarily connected.
We let $H_1 \cup H_2$ denote the disjoint union of graphs $H_1$ and $H_2$,
and let $kH$ denote the disjoint union of $k$ copies of a graph~$H$.
A cycle of length at least $4$ is called a {\it hole},
and a graph whose complement is a cycle of length at least $4$
is called an {\it antihole}.
A hole (antihole) is {\it odd} if it has an odd number of vertices.
(We usually talk about holes and antiholes as induced subgraphs.)
We recall that a graph is {\it $k$-colourable}
if each of its vertices can be coloured with one of $k$ colours
so that adjacent vertices are assigned distinct colours.
The smallest integer $k$ such that a given graph $G$ is $k$-colourable
is called the {\it chromatic number} of $G$, denoted by $\chi(G).$
We let $\omega(G)$ denote the \emph{clique number} of $G$,
that is, the order of a maximum complete subgraph of $G$.
(Clearly, $\chi(G) \geq \omega(G)$ for every graph $G$.)
%
A graph $G$ is {\it perfect} if $\chi(G')=\omega(G')$
for every induced subgraph $G'$ of $G.$

\medskip

Studying connected $\claw$-free graphs with independence number at least $3$, 
Chudnovsky and Seymour~\cite{CS} showed that their chromatic number can be at 
most twice as large as the clique number (and they also presented an infinite 
family of such graphs whose chromatic number is almost this large).
Considering a $3K_1$-free graph $G$ (clearly, $\claw$-free and of independence 
at most $2$), we recall that its chromatic number is at least 
$\frac{1}{2}|V(G)|$; and for some such graphs, $|V(G)|$ has order of magnitude 
$\frac{\omega(G)^2}{\log \omega(G)}$ (by a result of Kim~\cite{Kim} on Ramsey 
numbers).

While we are focused mainly on $\claw$-free graphs, we should say that relating 
forbidden induced subgraphs and colourings is a classical topic in graph theory.
Numerous results are known and, naturally, stronger colouring properties can be 
obtained when considering a pair (or larger set) of forbidden induced subgraphs
(for instance, see survey papers~\cite{GJPS17,RS04,RS19}).

We investigate restricting the class of $\claw$-free graphs by additional 
constraints (in particular, by different choices of an additional forbidden 
induced subgraph $X$) so that the resulting class consists of perfect graphs.

To this end, we will use the classical result on perfect graphs by Chudnovsky et al.~\cite{ChRST06}
as the main tool. 

\begin{theorem}[Chudnovsky et al.~\cite{ChRST06}]
\label{tSPGT}
A graph is perfect if and only if it contains neither an odd hole 
nor an odd antihole as an induced subgraph.
\end{theorem}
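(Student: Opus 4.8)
The statement is the Strong Perfect Graph Theorem, and I would split it into its two directions. The necessity direction—that a perfect graph can contain neither an odd hole nor an odd antihole—is the elementary one. Since perfection is inherited by induced subgraphs, it suffices to observe that the forbidden configurations are themselves imperfect: an odd hole $C_{2k+1}$ with $k \ge 2$ has $\omega = 2$ but $\chi = 3$, while an odd antihole $\overline{C_{2k+1}}$ with $k \ge 2$ has $\omega(\overline{C_{2k+1}}) = \alpha(C_{2k+1}) = k$ and $\chi(\overline{C_{2k+1}}) = k+1$ (a minimum clique cover of $C_{2k+1}$ needs $k$ edges and one leftover vertex). Hence any graph containing one of these already violates $\chi = \omega$ on that induced subgraph and is not perfect.

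The sufficiency direction—that every \emph{Berge graph} (no odd hole, no odd antihole) is perfect—is the deep content, and in the present paper it is invoked as a cited black box rather than reproved. To sketch how one would actually establish it, I would argue by contradiction through a minimal counterexample: let $G$ be a Berge graph that is not perfect but all of whose proper induced subgraphs are perfect. Such a \emph{minimal imperfect} graph satisfies strong rigidity constraints from Lovász's classical work on partitionable graphs—for instance $|V(G)| = \alpha(G)\,\omega(G)+1$, and every vertex lies both in a maximum clique and in a maximum stable set—while Chvátal's Star Cutset Lemma forbids any star cutset, and a standard reduction forbids any nontrivial homogeneous set. These are the properties that will eventually clash with structure.

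The heart of the plan is a decomposition theorem for Berge graphs: every Berge graph is either \emph{basic}—bipartite, the line graph of a bipartite graph, a double split graph, or the complement of one of these—or else admits a proper $2$-join, a proper $2$-join in the complement, or a balanced skew partition. One then closes the argument in two parts. First, the basic classes are directly perfect: König's theorems handle bipartite graphs and line graphs of bipartite graphs, the weak perfect graph theorem supplies their complements, and the double split graphs are checked by hand. Second, one shows that none of the three decompositions can occur in a minimal imperfect graph: for a $2$-join (or its complement analogue) one assembles an optimal colouring of $G$ from optimal colourings of strictly smaller induced subgraphs, which are perfect by minimality, forcing $\chi(G)=\omega(G)$; the balanced skew partition is excluded by the nontrivial fact, proved within the same program and building on the Roussel--Rubio lemma, that a minimal imperfect graph admits no balanced skew partition. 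Being neither basic nor decomposable, no such $G$ can exist.

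The main obstacle, by an enormous margin, is the decomposition theorem itself. Proving that every Berge graph with no such decomposition must be basic requires a long and delicate case analysis, organised around whether $G$ or its complement contains particular configurations such as a prism, a stretched configuration, or a suitable even hole; this is precisely what occupies the bulk of the original proof. The bookkeeping needed to rule out the decompositions in a minimal imperfect graph—especially the skew partition—is also substantial, though far less so than the decomposition theorem, and it is for exactly these reasons that the result is used here as an established tool rather than re-established.
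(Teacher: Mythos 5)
The paper does not actually prove this statement: it is the Strong Perfect Graph Theorem, imported verbatim from Chudnovsky, Robertson, Seymour and Thomas~\cite{ChRST06} and used purely as a black box, so there is no internal proof to compare yours against. Your treatment is consistent with that, and the part of your argument that is self-contained is correct: perfection is hereditary directly from the definition, an odd hole $C_{2k+1}$ with $k \geq 2$ has $\omega = 2 < 3 = \chi$, and an odd antihole $\overline{C_{2k+1}}$ has $\omega = \alpha(C_{2k+1}) = k$ while $\chi(\overline{C_{2k+1}})$ equals the clique cover number of $C_{2k+1}$, which is $\lceil (2k+1)/2 \rceil = k+1$ since every clique of $C_{2k+1}$ has at most two vertices; so the necessity direction is complete. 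Your sufficiency direction accurately describes the architecture of the cited proof---the Lov\'asz--Padberg properties of minimal imperfect (partitionable) graphs, Chv\'atal's star cutset lemma, the decomposition theorem splitting Berge graphs into basic classes versus proper $2$-joins, complement $2$-joins and balanced skew partitions, K\"onig's theorems and the weak perfect graph theorem for the basic classes, and the Roussel--Rubio-based exclusion of a balanced skew partition in a minimal counterexample---but, as you acknowledge yourself, it is a sketch: the decomposition theorem is invoked, not proved, and that is where essentially all of the difficulty of the original (roughly 150-page) argument lives. So as a standalone proof your proposal is incomplete in its hard direction, but as an account of how the theorem is established in the literature, and of why the present paper cites it rather than reproving it, it is accurate and the paper's usage requires nothing more.
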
 

We also use the following lemma due to Ben Rebea~\cite{BR}
(see also~\cite{ChS88,Fo93}).

\begin{lemma}[Ben Rebea~\cite{BR}]
\label{lBR}
Let $G$ be a connected $\claw$-free graph
with independence number at least $3$.
If $G$ contains an induced odd antihole,
then $G$ contains an induced $C_5$.
\end{lemma}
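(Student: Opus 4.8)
The plan is to argue directly: assuming $G$ contains an induced odd antihole, we produce an induced $C_5$. Since $C_5=\overline{C_5}$ is itself an odd antihole, we may take a shortest induced odd antihole $A=\overline{C_{2k+1}}$ in $G$; if $k=2$ then $A\cong C_5$ and we are done, so assume $k\ge3$. Label $V(A)=\{a_0,\dots,a_{2k}\}$ with indices read modulo $2k+1$, so that $a_ia_j\in E(A)$ exactly when $i-j\not\equiv\pm1$; thus the non-edges of $A$ are precisely the consecutive pairs $a_ia_{i+1}$. Two facts are used throughout: $\alpha(A)=2$ (an independent set of $A$ is a clique of $C_{2k+1}$), and consequently $A$ is itself $\claw$-free. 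Because $\alpha(A)=2<3\le\alpha(G)$, the antihole $A$ is a proper induced subgraph of $G$.

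First I would isolate the configuration that yields the $C_5$. Call a vertex $w\in V(G)\sm V(A)$ \emph{partial} if it has both a neighbour and a non-neighbour in $A$. The key computation is: if some $w\in V(G)\sm V(A)$ satisfies $w\in N(a_i)\cap N(a_{i+1})$ but $w\notin N(a_{i-1})\cup N(a_{i+2})$ for some $i$, then $\{w,a_i,a_{i+2},a_{i-1},a_{i+1}\}$ induces a $C_5$. This is checked directly from the adjacency rule, the cyclic order being $w\,a_i\,a_{i+2}\,a_{i-1}\,a_{i+1}\,w$: the five edges come from $w\sim a_i,a_{i+1}$ together with the index differences $2,3,2$, while the five non-edges come from $w\not\sim a_{i-1},a_{i+2}$ together with the three consecutive pairs $a_{i-1}a_i$, $a_ia_{i+1}$, $a_{i+1}a_{i+2}$. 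So it suffices to exhibit a vertex with this attachment pattern.

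Next I would show that a partial vertex exists at all, by a $\claw$-argument. If there were no partial vertex, write $V(G)\sm V(A)=B\cup D$, where $B$ is complete and $D$ anticomplete to $A$. If $D=\emptyset$, a maximum independent set $I$ (of size $\ge3$) cannot meet $V(A)$ in three vertices and cannot contain both an $A$-vertex and a $B$-vertex, forcing $I\subseteq B$; then any $a_j$ with three members of $I$ as leaves is a $\claw$. If $D\neq\emptyset$, connectedness forces a $B$--$D$ edge $bd$, and $\{b;d,a_j,a_{j+1}\}$ is a $\claw$ on a consecutive pair $a_ja_{j+1}$. Either way $\claw$-freeness is violated, so a partial vertex exists.

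Finally — and this is where the real work lies — I would force some partial vertex into the pattern of the second paragraph. Centre-of-$\claw$ constraints at the vertices of $A$ show that, for a partial vertex $w$, the neighbours of $w$ read cyclically around $A$ decompose into runs of consecutive indices each of length $1$ or $\ge3$, since a run of length exactly $2$ is precisely the forbidden pattern and yields the $C_5$ at once. The obstacle is that a single partial vertex need not contain a run of length $2$ (for instance a vertex adjacent to all of $A$ but one), so a lone partial vertex is not enough; here the hypothesis $\alpha(G)\ge3$ must re-enter. Using a further independent vertex $x$ with two consecutive non-neighbours $a_p,a_{p+1}\in V(A)$ — for which the same $\claw$-constraint gives $N(x)\cap V(A)\subseteq\{a_{p-1},a_{p+2}\}$ — I would analyse the $\claw$s centred at $A$-vertices on the common neighbourhood of $x$ and a partial vertex, so as to split a long run and create a run of length $2$, producing the $C_5$. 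Carrying out this last case analysis cleanly — reconciling the rigid attachments forced by $\claw$-freeness with the sparse attachment guaranteed by $\alpha(G)\ge3$, with the minimality of $A$ available to discard degenerate sub-cases — is the main difficulty.
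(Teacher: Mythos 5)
The paper never proves this lemma --- it is imported from the literature (Ben Rebea's thesis, with pointers to Chv\'atal--Sbihi and Fouquet), so your attempt can only be judged on its own merits. Its completed parts are correct: the five vertices $w,a_i,a_{i+2},a_{i-1},a_{i+1}$ do induce a $C_5$ under the stated attachment pattern (for $2k+1\geq 7$), and both claw arguments establishing the existence of a partial vertex are sound.

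The last paragraph, however, is not a proof but a declaration of intent, and the intended route cannot work. Your plan is to use claw-freeness (claws centred at vertices of $A$) to force some \emph{single} vertex to have a run of neighbours of length exactly $2$ on $A$. Consider the graph obtained from $A=\overline{C_9}$, with vertices $a_0,\dots,a_8$, by adding three pairwise non-adjacent vertices $x,y,z$ whose non-neighbourhoods in $A$ are $\{a_0,a_2,a_4\}$, $\{a_3,a_5,a_7\}$ and $\{a_6,a_8,a_1\}$, respectively. This graph is connected, has independence number exactly $3$, contains the odd antihole $A$, and is claw-free: no claw is centred at $x$, $y$ or $z$ (pairwise non-adjacent vertices of $A$ are consecutive, so there are at most two of them), and none at any $a_j$, because none of the three non-neighbourhoods contains a consecutive pair, they are pairwise disjoint, and together they cover $V(A)$, so no $a_j$ is adjacent to all of $x,y,z$. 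Yet the runs of neighbours of $x$, $y$ and $z$ on $A$ have lengths $1,1,4$ --- no vertex of this graph has a run of length $2$. The induced $C_5$ guaranteed by the lemma does exist, for instance $x\,a_8\,y\,a_0\,a_7\,x$, but it passes through \emph{two} of the added vertices; no analysis confined to claws and single-vertex attachment patterns can detect it, so claw-freeness alone can never force your target configuration. The missing idea is that, after using minimality of $A$ to assume $G$ contains no induced $C_5$ at all, one must also forbid exactly these two-outside-vertex $C_5$ configurations; in the standard argument this, combined with the claw constraints, shows that the non-neighbourhoods of three independent vertices partition $V(A)$ and must alternate around the cycle, which is impossible because $|V(A)|$ is odd. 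Minimality of $A$ is thus not a device for ``degenerate sub-cases'' but the source of the global $C_5$-freeness hypothesis on which the entire final argument runs --- and which your sketch never actually uses.
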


\medskip

Our former investigation of connected $\{ \claw,X \}$-free graphs
(of independence at least $3$) resulted in the following characterisations.

\begin{theorem}[Brause et al.~\cite{alpha3}]
\label{thmA-characterization-alpha_3}
Let $X$ be a graph and
$\mathcal{G}$ be the class of all connected $\{K_{1,3},X\}$-free graphs
which are distinct from an odd cycle.
Then the following statements are satisfied.
\begin{itemize}[topsep=5pt, partopsep=5pt]
\item
If $X$ is an induced subgraph of $Z_1$ or $P_4$,
then all graphs of $\mathcal{G}$ are perfect.
\item
Otherwise, there are infinitely many graphs of $\mathcal{G}$
whose chromatic number is greater than the clique number.
\end{itemize}
Furthermore, the following are satisfied for the class $\mathcal{G}'$ of
all graphs of $\mathcal{G}$ whose independence number is at least $3$.
\begin{itemize}[topsep=5pt, partopsep=5pt]
\item
If $X$ is an induced subgraph of $Z_2$ or of $P_5$,
then all graphs of $\mathcal{G}'$ are perfect.
\item
Otherwise, there are infinitely many graphs of $\mathcal{G}'$
whose chromatic number is greater than the clique number.
\end{itemize}
\end{theorem}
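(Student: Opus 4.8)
The plan is to prove each of the two dichotomies by combining the Strong Perfect Graph Theorem (Theorem~\ref{tSPGT}) with a short analysis of how a vertex can attach to an induced odd hole in a $\claw$-free graph. For the positive directions I would show that every graph in the relevant class contains neither an odd hole nor an odd antihole. Odd antiholes are dealt with first: in the class $\mathcal{G}'$ (independence at least $3$), Ben Rebea's Lemma~\ref{lBR} replaces any induced odd antihole by an induced $C_5$, so it suffices to exclude odd holes; in the class $\mathcal{G}$ one instead checks directly that every odd antihole $\overline{C_{2k+1}}$ with $k\geq 3$ contains an induced $Z_1$, while $\overline{C_5}=C_5$ is itself an odd hole. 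Thus in all cases the problem reduces to excluding odd holes.

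The key lemma I would isolate is that if $G$ is connected, $\claw$-free and distinct from an odd cycle, and $G$ contains an induced odd hole $C$, then $C$ is a \emph{proper} induced subgraph, so some vertex $u\notin C$ has a neighbour on $C$. By $\claw$-freeness $u$ must be adjacent to two consecutive vertices of $C$, and inspecting the finitely many claw-feasible attachment patterns shows that the triangle so created, together with the hole, exposes the required subgraph: a proper attachment always yields an induced $Z_1$, extending one step further along $C$ upgrades this to an induced $Z_2$, and if $C$ has length at least $7$ then $C$ already contains an induced $P_5$, whereas a proper attachment to a $C_5$ produces one. This settles the four positive cases. Indeed $X\subseteq P_4$ gives a cograph, which is perfect because every hole or antihole of length at least $5$ contains an induced $P_4$; $X\subseteq Z_1$ excludes odd holes and, via the antihole check above, odd antiholes in $\mathcal{G}$; and in $\mathcal{G}'$, after the reduction to $C_5$, either $X\subseteq Z_2$ or $X\subseteq P_5$ excludes all remaining odd holes.

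For the negative directions I would, for a fixed forbidden graph $X$ that is \emph{not} an induced subgraph of the listed graphs, construct an infinite family of imperfect, connected, $\{\claw,X\}$-free graphs distinct from an odd cycle. Since $X'$-freeness implies $X$-freeness whenever $X'$ is an induced subgraph of $X$, it suffices to treat the minimal graphs $X'$ that are neither an induced subgraph of $Z_1$ nor of $P_4$ (respectively of $Z_2$ nor of $P_5$). For $\mathcal{G}$ one checks these minimal obstructions to be $3K_1$, $2K_2$, $K_3\cup K_1$, $C_4$, $K_4-e$ and $K_4$; the odd antiholes $\overline{C_{2k+1}}$ form the family avoiding $3K_1$, $2K_2$ and $K_3\cup K_1$ (their complements contain a cycle or a vertex of degree $3$ and hence do not embed as induced subgraphs of a long cycle), while the line graphs of a long odd cycle with a pendant edge attached, being $\claw$-free and even diamond-free, handle $C_4$, $K_4-e$ and $K_4$ simultaneously. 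For $\mathcal{G}'$ the family members must additionally have independence at least $3$, so the antihole family is unavailable; there I would instead pad a long induced odd hole by a small $\claw$-free gadget that keeps the independence number large while avoiding the relevant (larger) list of minimal obstructions.

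I expect the negative direction to be the main obstacle. The difficulty is that the four requirements---$\claw$-freeness, $X$-freeness, imperfection, and being distinct from an odd cycle---pull against one another, and Ramsey-type constraints rule out naive families (for instance, no infinite family avoiding a fixed clique can have bounded independence number, which is precisely what forces the line-graph constructions). Verifying, for each minimal obstruction, that the proposed family indeed avoids it and remains $\claw$-free is the delicate, case-heavy part. On the positive side the only subtle point is the attachment analysis for $C_5$ in the class $\mathcal{G}'$, where one must confirm that every claw-feasible attachment really does produce an induced $P_5$ or $Z_2$ rather than escaping the argument.
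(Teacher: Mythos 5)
This statement is quoted by the paper from reference~\cite{alpha3}; the paper itself contains no proof of it, so your proposal can only be judged on its own merits. Judged that way, it has two genuine gaps. First, in the positive direction for $\mathcal{G}'$, your key lemma is false as stated: it is not true that every ``claw-feasible'' attachment of a vertex to an induced $C_5$ produces an induced $Z_2$ or $P_5$. The wheel $W_5$ (a $C_5$ plus a vertex adjacent to all five cycle vertices) and the graph obtained from $C_5$ by adding a vertex whose neighbourhood on the cycle induces $P_4$ are both connected, $\claw$-free, distinct from an odd cycle, contain an induced $C_5$, and contain \emph{neither} $Z_2$ \emph{nor} $P_5$ as an induced subgraph. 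Both have independence number $2$, which is exactly the point: the hypothesis $\alpha\geq 3$ must enter the odd-hole exclusion argument itself, globally, and not merely through Ben Rebea's lemma for antiholes. Your plan never uses $\alpha\geq 3$ outside of Lemma~\ref{lBR}, so the $C_5$ case --- which you yourself flag as the ``only subtle point'' --- cannot be closed by the local attachment analysis you describe; it needs a genuinely different argument.

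Second, the negative direction proves the wrong thing in the hard cases. The theorem requires infinitely many graphs with $\chi>\omega$, which is strictly stronger than imperfection. Your family for the obstructions $C_4$, $K_4-e$, $K_4$ --- line graphs of a long odd cycle with a pendant edge, i.e.\ $C_n$ plus a vertex adjacent to two consecutive cycle vertices --- has $\chi=\omega=3$ (it is imperfect because it contains an odd hole, but that is not what must be exhibited). The same defect dooms the vague plan for $\mathcal{G}'$ of ``padding a long induced odd hole by a small gadget'': since a connected $\claw$-free triangle-free graph is a path or a cycle, every relevant graph has $\omega\geq 3$, so one must force $\chi\geq 4$, which no small padding of a $3$-chromatic odd hole will do. Working constructions here are of a different kind, e.g.\ line graphs of class-two cubic graphs of girth at least $5$ (snarks) for the $C_4$/$K_4-e$/$K_4$ cases, and suitable inflations of long odd cycles for others. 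Finally, for $\mathcal{G}'$ you never determine the list of minimal obstructions, which differs from the list for $\mathcal{G}$ (it contains, besides $\claw$, $C_4$, $C_5$, $K_4-e$, $K_4$, also disconnected graphs such as $K_2\cup 2K_1$ and $K_3\cup K_2$), and each of these requires its own family; this case analysis is the bulk of the work and is absent.
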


\begin{figure}[ht]
\centering
\includegraphics[scale=0.7]{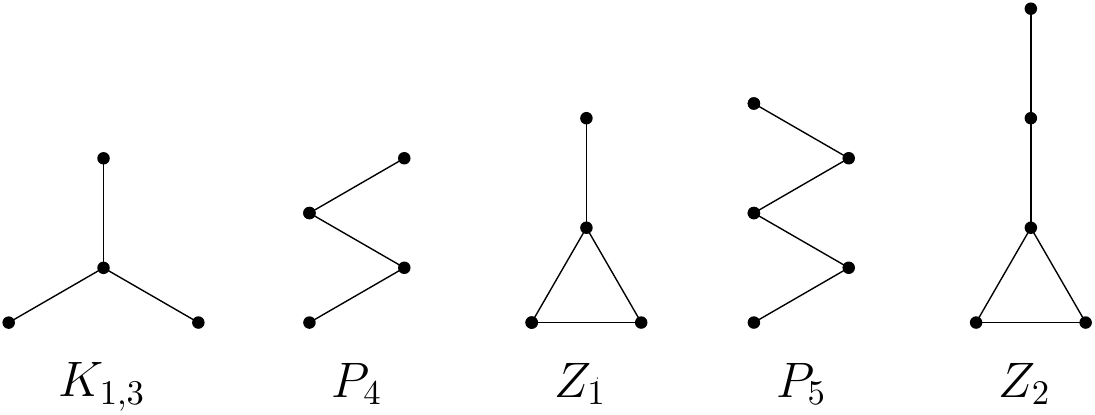}
    \caption{The graphs $\claw$, $P_4$, $Z_1$, $P_5$ and $Z_2$ 
    (considered in Theorem~\ref{thmA-characterization-alpha_3}).}
    \label{figClawPsZs}
\end{figure}

The dichotomic nature of Theorem~\ref{thmA-characterization-alpha_3}
(for graphs with independence number at least $2$ and at least $3$)
raises a question on the nature of an analogous statement
for graphs of independence number at least $4$.  
Motivated by this question, we look one step further in
this direction and investigate perfectness of these graphs.

\section{Main result}
\label{sec-main}

In this section, we answer the question motivated by the nature of Theorem~\ref{thmA-characterization-alpha_3}.
The dichotomic character of Theorem~\ref{thmA-characterization-alpha_3} does not extend to
$\{K_{1,3},X\}$-free graphs with independence number at least $4$.
We show a full characterization and describe the finitely many exceptions,
which are given by one of the forbidden pairs. 
The main result of the present note is as follows.

\begin{theorem}
\label{t1}
Let $X$ be a graph
and $\GG$ be the class of all connected $\{ \claw,X \}$-free graphs
which are distinct from an odd cycle and have independence number at least $4$.
Let $\XX$ be the set of graphs which consists of
$P_6$, $K_1 \cup P_5$, $2P_3$, $Z_2$, $K_1 \cup Z_1$
and all their induced subgraphs.
The following statements are satisfied.
\begin{enumerate}[topsep=5pt, partopsep=5pt]
\item
If $X$ belongs to $\XX$,
then all graphs of $\GG$ are perfect.
\item
If $X$ is $2K_1 \cup K_3$, then the only imperfect graphs of $\GG$
are the graphs $E_1, \ldots, E_8$, depicted in Figure~\ref{figExceptions}.
\item
If $X$ does not belong to $\XX \cup \{ 2K_1 \cup K_3 \}$,
then $\GG$ contains infinitely many imperfect graphs.
\end{enumerate}
\end{theorem}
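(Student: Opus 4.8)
The plan is to use the Strong Perfect Graph Theorem (Theorem~\ref{tSPGT}) throughout: a graph in $\GG$ is perfect precisely when it contains neither an odd hole nor an odd antihole. Since every graph in $\GG$ is connected, claw-free and satisfies $\alpha(G)\ge 4\ge 3$, Lemma~\ref{lBR} lets me replace odd antiholes by an induced $C_5$, which is itself an odd hole; hence the whole problem reduces to controlling induced odd holes. For statement~(1) I would first reduce to the five maximal members of $\XX$, namely $P_6$, $K_1\cup P_5$, $2P_3$, $Z_2$ and $K_1\cup Z_1$: if $X$ is an induced subgraph of one of these, then being $\{\claw,X\}$-free is a stronger restriction than being $\{\claw,Y\}$-free for the larger graph $Y$, so perfectness for $Y$ implies it for $X$. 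The case $X=Z_2$ is already covered by Theorem~\ref{thmA-characterization-alpha_3} (as $\alpha\ge4$ implies $\alpha\ge3$), so four maximal graphs remain.

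The heart of~(1) is a structural claim: if $G$ is connected, claw-free, satisfies $\alpha(G)\ge 4$ and is not an odd cycle, then any induced odd hole of $G$ forces each of $P_6$, $K_1\cup P_5$, $2P_3$ and $K_1\cup Z_1$ as an induced subgraph; equivalently, an imperfect graph of $\GG$ cannot be $\{\claw,X\}$-free for any of these $X$. To prove it I would fix an induced odd hole $H$ and exploit that $\alpha(C_5)=2$ and $\alpha(C_7)=3$, so that $\alpha(G)\ge4$ together with connectivity guarantees vertices off $H$ (an odd cycle alone being excluded). The path-type targets $P_6$, $K_1\cup P_5$ and $2P_3$ come either from a single long hole ($C_9$ and longer already contain all three) or, in the short cases $C_5$ and $C_7$, from the hole together with an off-hole vertex and its attachment. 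The decisive difficulty is producing the triangle required by $K_1\cup Z_1$ (and, in the separate $Z_2$ reduction, by $Z_2$): a hole is triangle-free, so the triangle must be manufactured from off-hole vertices, and here I would use claw-freeness to constrain how such a vertex attaches to $H$ (its neighbourhood cannot contain an independent triple), turning a single off-hole neighbour into a triangle and the remaining hole vertices into the pendant vertex and the isolated vertex of $K_1\cup Z_1$. I expect this attachment analysis, split by hole length and by the number and adjacency pattern of off-hole vertices, to be the main obstacle of the positive direction.

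For statement~(2) I would run the same analysis under the extra hypothesis that $G$ is $(2K_1\cup K_3)$-free. Forbidding a triangle together with two independent vertices disjoint from it severely limits how an odd hole can coexist with off-hole cliques while $\alpha(G)$ stays at least $4$; combined with claw-freeness this should bound the order of any imperfect $G\in\GG$, leaving only finitely many candidates. I would then verify directly that $E_1,\dots,E_8$ (Figure~\ref{figExceptions}) are connected, claw-free, $(2K_1\cup K_3)$-free, imperfect and of independence number at least $4$, and that the bounded-case analysis produces no further graph. Establishing that the list is \emph{exactly} these eight graphs --- the tight enumeration rather than mere finiteness --- is the delicate part here.

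For statement~(3) I would first observe that the family of graphs \emph{not} in $\XX$ is closed upward under the induced-subgraph order, so it is generated by its finitely many minimal elements (each of order at most one more than the largest generator of $\XX$, hence at most seven vertices). For every such minimal obstruction $X_0$ other than $2K_1\cup K_3$ it suffices to exhibit one infinite family of connected claw-free imperfect graphs with $\alpha\ge4$ that is $X_0$-free, since $X_0$-freeness implies $X$-freeness for every $X\supseteq X_0$. These families I would build around a fixed odd hole enlarged by clique-type attachments that raise the independence number to $4$ without creating a claw and without creating $X_0$. The only graphs $X$ left uncovered are those lying strictly above $2K_1\cup K_3$ in the order yet above no other minimal obstruction; for these finitely many $X$ a $(2K_1\cup K_3)$-free family is unavailable (by~(2) it would be finite), so I would give separate constructions that are permitted to contain $2K_1\cup K_3$ but still avoid the larger $X$.
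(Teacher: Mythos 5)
Your overall frame is the same as the paper's: Theorem~\ref{tSPGT} plus Lemma~\ref{lBR} reduce perfectness of graphs in $\GG$ to excluding induced odd holes, and your plan for statement (3) (finitely many minimal obstructions on at most seven vertices, one infinite connected claw-free imperfect family with $\alpha\ge 4$ avoiding each, plus separate constructions for the graphs lying above $2K_1\cup K_3$ and above no other obstruction) is essentially the paper's Lemma~\ref{l7} with its families $\FF_1,\ldots,\FF_4$. The genuine gap is in statements (1) and (2), where you defer exactly the content of the paper's Lemmas~\ref{l5} and~\ref{l6} to ``the main obstacle'' and ``the delicate part'' without an argument that could close them. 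Worse, your proposed mechanism --- fix an odd hole and analyse how off-hole vertices attach to it --- contains a false step: for a $C_5$, the hole together with a \emph{single} off-hole vertex can never yield $P_6$, $K_1\cup P_5$ or $2P_3$, since each of these targets has six vertices and would therefore have to be the entire six-vertex graph, which contains a cycle while the targets are forests; likewise $C_7$ itself contains neither $K_1\cup P_5$ nor $2P_3$ (its only six-vertex induced subgraph is $P_6$). So the short holes need at least two extra vertices in suitable positions, and a purely local analysis gives no reason why such vertices should exist anywhere near the hole --- the four independent vertices guaranteed by $\alpha(G)\ge 4$ may sit arbitrarily far away.

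What the paper supplies, and your sketch lacks, is a global device for channelling the independence number into the neighbourhood of the hole: Lemma~\ref{l5} takes a \emph{minimal counterexample} $G$, uses minimality to force every vertex outside the hole $C$ to be adjacent to $C$ (and, in the $C_5$ case, even to lie in a maximum independent set $I$), and then plays the counting inequality of Claim~\ref{cineq} --- an upper bound on the number of edges between $C$ and $I\sm C$ --- against lower bounds obtained from claw-freeness and the forbidden configurations $H_1,\ldots,H_7$, reaching a numerical contradiction. Nothing in your case split ``by hole length and by the number and adjacency pattern of off-hole vertices'' replaces this; without it the analysis does not terminate. The same problem recurs in your statement (2): ``bounding the order of any imperfect $G$'' is precisely what the paper's Lemma~\ref{l6} accomplishes, by classifying the three possible attachments of a vertex to an induced $H_6$ and showing that at most three such vertices can coexist, which is how the exact list $E_1,\ldots,E_8$ emerges; your proposal asserts that such a bound ``should'' follow but gives no argument. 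In short, the skeleton and the reduction steps are correct and match the paper, but the two structural lemmas that carry the entire proof are missing, and the local strategy you propose in their place would fail.
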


\begin{figure}[ht]
\centering
\includegraphics[scale=0.7]{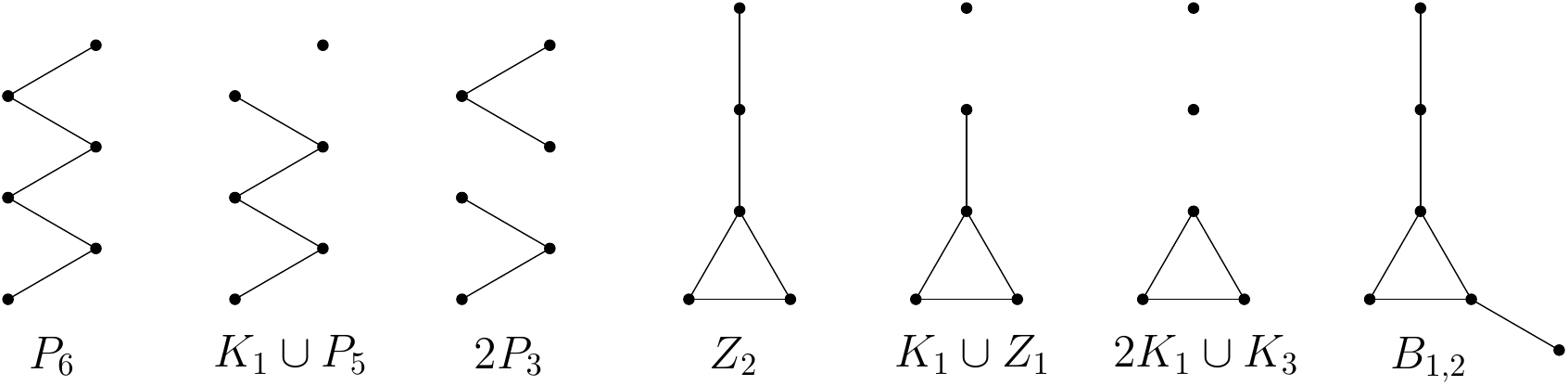}
    \caption{The graphs
    $P_6$, $K_1 \cup P_5$, $2P_3$, $Z_2$, $K_1 \cup Z_1$, $2K_1 \cup K_3$ and
    $B_{1,2}$
    (considered in Theorems~\ref{t1} and~\ref{thm-bull-exceptions}).}
    \label{figChoicesOfX}
\end{figure}

\begin{figure}[ht]
\centering
\includegraphics[scale=0.7]{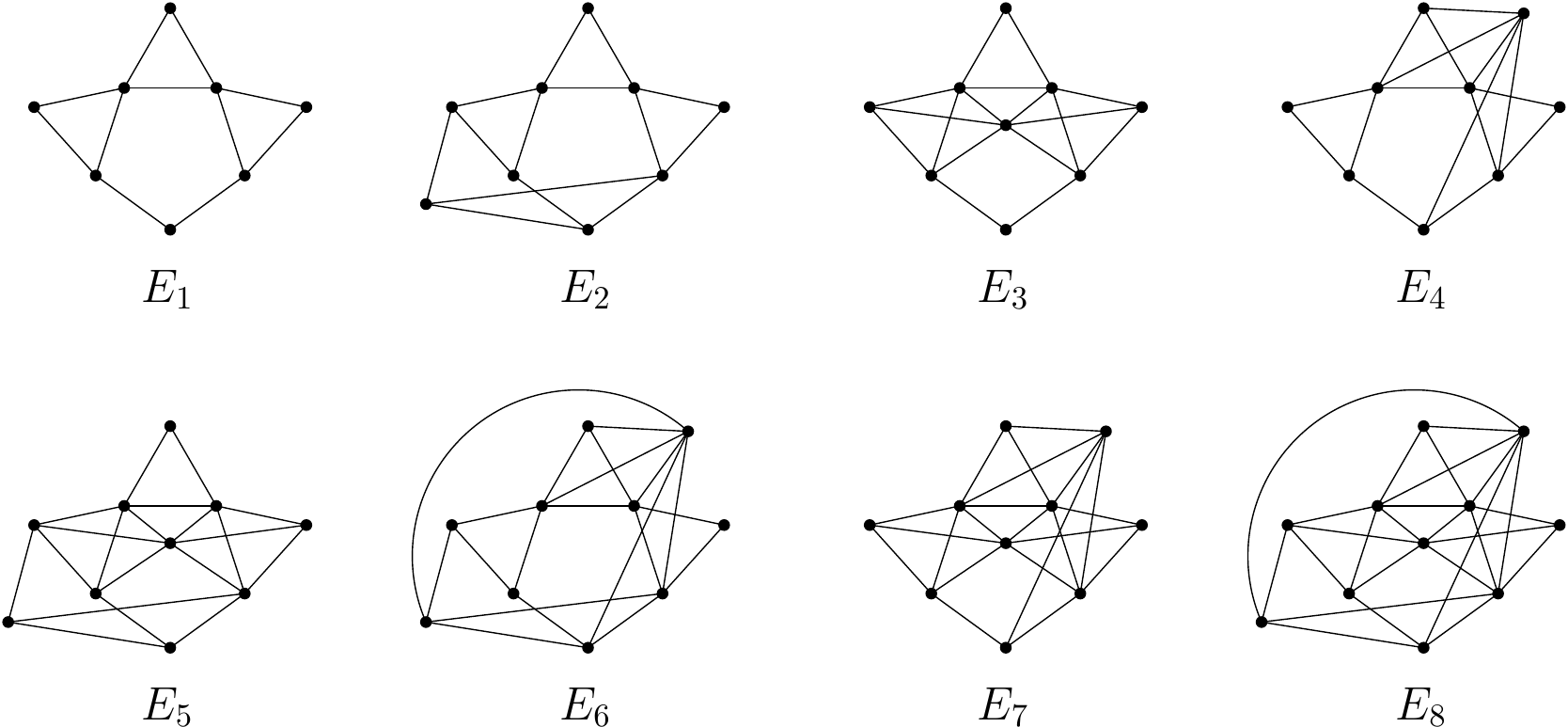}
    \caption{The graphs $E_1, \ldots, E_8$.}
    \label{figExceptions}
\end{figure}

We note that the assumption of being distinct from an odd cycle
is satisfied trivially for particular choices of $X$,
and that similar characterisations follow for the case when the graphs 
considered are not necessarily distinct from an odd cycle.
(This concerns choosing $X$ as an induced subgraph of $P_4$ or $P_5$
in the respective parts of Theorem~\ref{thmA-characterization-alpha_3},
and as an induced subgraph of $P_6$, $K_1 \cup P_5$ or $2P_3$ in 
Theorem~\ref{t1}.)

We also note that other choices of the graph $X$ (in item (3) of Theorem~\ref{t1}) 
can still admit a `nice' description of all (infinitely many) imperfect graphs in the class~$\GG$.
This fact is illustrated on the example $X=B_{1,2}$ (see Figure~\ref{figChoicesOfX})
by proving Theorem~\ref{thm-bull-exceptions} in Section~\ref{sec:concl}.

\medskip

In order to prove Theorem~\ref{t1}, we will show three structural lemmas on $\claw$-free graphs.

\begin{lemma}
\label{l5}
Let $G$ be a connected $\claw$-free graph
with independence number at least $4$, and
$H_1, \ldots, H_7$ be the graphs depicted in Figure~\ref{figH}.
The following statements are satisfied.
\begin{enumerate}[topsep=5pt, partopsep=5pt]
\item
If $G$ is $H_1$-free,
then it is $C_7$-free.
\item
If $G$ is $\{H_2, \ldots, H_7 \}$-free,
then it is $C_5$-free. 
\end{enumerate}
\end{lemma}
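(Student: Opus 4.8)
The plan is to prove each item by a contrapositive/forbidden-configuration argument: assume $G$ contains the relevant odd cycle ($C_7$ in item~(1), $C_5$ in item~(2)) and show that the claw-freeness together with independence number at least $4$ forces one of the listed graphs $H_i$ to appear as an induced subgraph. Throughout I would fix an induced copy of the cycle and then analyse how the remaining vertices of $G$ attach to it, using the claw-free condition to severely restrict the possible neighbourhoods.

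For item~(1), suppose $G$ contains an induced $C_7$ on vertices $c_1, \ldots, c_7$ (indices mod~$7$). Since the independence number of $G$ is at least $4$, there must be a vertex (or vertices) outside this cycle, because $C_7$ alone has independence number $3$; more precisely, any independent set of size $4$ must use at least one vertex off the cycle. I would pick such an independent set $I$ with $|I| = 4$ and examine the structure it imposes. For each vertex $v \notin V(C_7)$, claw-freeness restricts $N(v) \cap V(C_7)$: among any three pairwise non-adjacent cycle-vertices together with $v$, we cannot have $v$ adjacent to all three while they stay independent, so the trace of $v$ on the cycle avoids certain independent triples. The goal is to show that the cycle plus a suitably chosen vertex realising the independence number~$4$ condition induces exactly $H_1$ (up to the freedom recorded in Figure~\ref{figH}). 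I expect the argument to reduce to a finite case analysis on the adjacency pattern of the extra vertex to the seven cycle vertices.

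For item~(2), the same strategy applies with an induced $C_5$ on $c_1, \ldots, c_5$. Here $C_5$ has independence number $2$, so to reach independence number~$4$ we need at least two further independent vertices off the cycle, and their interaction with the cycle and with each other is the crux. Again I would invoke claw-freeness to bound each external vertex's trace on the $C_5$: a vertex adjacent to two non-consecutive cycle vertices, or to a single vertex, creates claws unless its neighbourhood is a union of consecutive arcs. Enumerating the admissible traces and combining two (or more) external vertices witnessing the large independent set, I would argue that one of $H_2, \ldots, H_7$ must be induced. Since there are six target graphs, I anticipate the case split here to be broader than in item~(1), organised by the trace-types of the external vertices and the adjacencies among them.

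The main obstacle will be the combinatorial explosion in item~(2): classifying all ways that two or more independent external vertices can attach to an induced $C_5$ while maintaining claw-freeness, and then verifying that every surviving configuration contains one of the six graphs $H_i$ (and conversely that none of the $H_i$ can be avoided) is where the bookkeeping concentrates. I would manage this by first proving a short auxiliary claim that each external vertex's neighbourhood on the cycle is a union of at most two ``arcs'' (consecutive cycle vertices), which drastically cuts the number of trace-types, and then handle the pairwise adjacency of external vertices as a secondary split. The independence-number-$4$ hypothesis is essential precisely to guarantee enough external vertices to force the richer graphs $H_2, \ldots, H_7$ rather than merely a $C_5$ with a single pendant-like attachment.
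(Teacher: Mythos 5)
Your overall framing (fix the odd cycle, classify how external vertices attach to it via claw-freeness, and hunt for an induced $H_i$) superficially matches the paper's starting point, but the mechanism you propose cannot close the argument, and your central claim for item~(1) is false as stated. The graph $H_1$ is $C_7$ plus one vertex whose neighbourhood on the cycle induces $K_2$, and you claim that ``the cycle plus a suitably chosen vertex \ldots induces exactly $H_1$.'' Here is a concrete obstruction: take $C_7=c_1c_2\cdots c_7$ and add two non-adjacent vertices $x_1$, $x_2$ with $N(x_1)\cap C=\{c_4,c_5,c_6\}$ and $N(x_2)\cap C=\{c_5,c_6,c_7\}$. This graph is connected, claw-free, and $\{c_1,c_3,x_1,x_2\}$ is an independent set of size $4$; yet both attachments are $P_3$-traces, so no induced subgraph containing the \emph{fixed} cycle $C$ is isomorphic to $H_1$. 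The induced $H_1$ that this graph does contain (as the lemma guarantees) lives on a \emph{different} $C_7$, obtained by rerouting the cycle through $x_1$ (replacing $c_5$), after which $x_2$ attaches to the new cycle with a $K_2$-trace. Any correct proof must therefore let the cycle move, which is exactly what your fixed-cycle case analysis does not do. The paper achieves this with a minimal-counterexample argument: it proves that in a vertex-minimal counterexample no vertex can have a $P_3$-trace, because if $x$ had one with centre $y\in C$, then both $G-x$ and $G-y$ remain connected, still contain an induced $C_\ell$ (in $G-y$ the cycle is rerouted through $x$), and at least one of them keeps independence number at least $4$ --- contradicting minimality. Your proposal contains no such idea, nor any substitute for it.

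The second missing ingredient is the global counting argument, which is where the independence-number-$4$ hypothesis actually enters. Even after $K_2$- and $P_3$-traces are excluded, the remaining attachment types ($P_4$, and $2K_2$ for $C_7$) are locally consistent with claw-freeness and $H_i$-freeness, so no finite case analysis of one or two external vertices yields a contradiction. The paper instead counts the number $e$ of edges between $C$ and $I\setminus C$ for a maximum independent set $I$: from below, every vertex of $I\setminus C$ has at least four neighbours on $C$ (once $K_2$- and $P_3$-traces are gone), giving $e\geq 4|I\setminus C|$ (minus a bounded correction in item~(2), controlled by $H_6$, $H_7$); from above, a claw-freeness count over the cycle vertices gives $e\leq 2\ell-4|I\cap(C\cup N(C))|+4|I\cap N(C)|$, and with $|I|\geq 4$ these two bounds contradict each other. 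Your proposal gestures at ``combining external vertices witnessing the large independent set'' but never specifies how a contradiction (or an induced $H_i$) emerges; that unexplained step is precisely the content of the paper's Claims on traces and on the edge count, and without them the plan is a heuristic outline rather than a proof.
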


\begin{figure}[ht]
\centering
\includegraphics[scale=0.75]{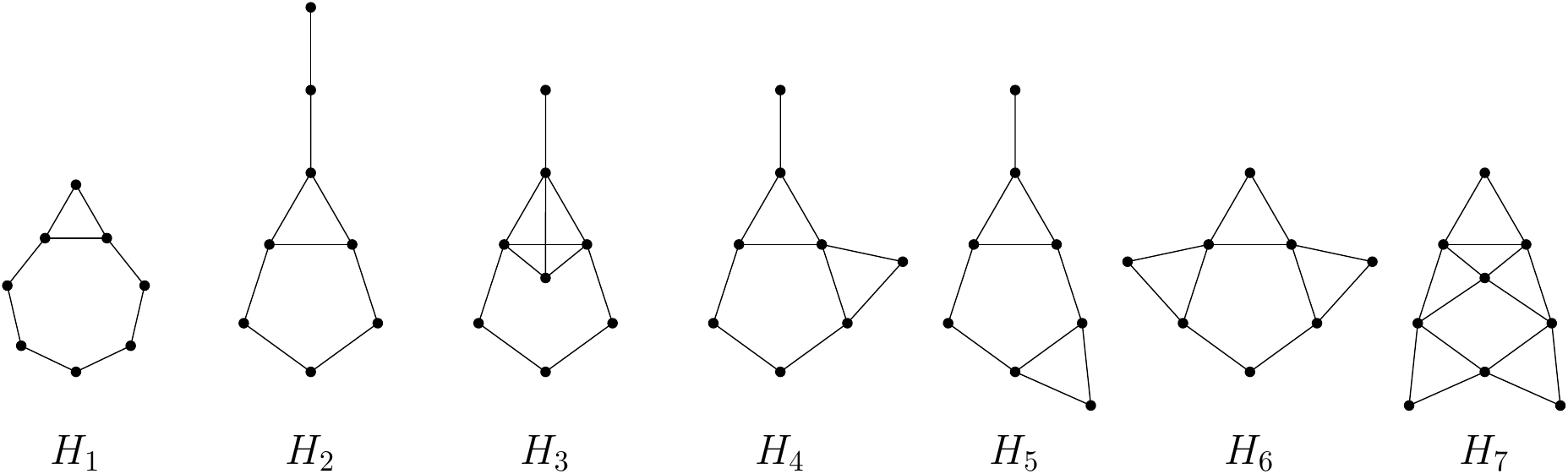}
    \caption{The graphs $H_1, \ldots, H_7$.
    We note that $H_6$ is isomorphic to the graph $E_1$,
    depicted in Figure~\ref{figExceptions}.}
    \label{figH}
\end{figure}

\begin{lemma}
\label{l6}
If $G$ is a connected $\{ \claw, 2K_1 \cup K_3 \}$-free graph with independence 
number at least~$4$ which is distinct from the graphs $E_1, \ldots, E_8$ 
(depicted in Figure~\ref{figExceptions}), then $G$ is $\{ C_5, C_7 \}$-free.
\end{lemma}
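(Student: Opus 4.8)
The plan is to derive the conclusion from Lemma~\ref{l5}, by showing that under the present hypotheses the configurations $H_1, \dots, H_7$ can occur only in the exceptional graphs. I would argue by contradiction: assume $G$ satisfies all the hypotheses (connected, $\{\claw, 2K_1 \cup K_3\}$-free, $\alpha(G) \ge 4$) but is not $\{C_5, C_7\}$-free, and split according to which cycle appears, aiming to conclude that $G$ must be one of $E_1, \dots, E_8$.

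First I would dispose of $C_7$. If $G$ contains an induced $C_7$, then by Lemma~\ref{l5}(1) it contains an induced $H_1$. I would check directly from Figure~\ref{figH} that $H_1$ itself has an induced $2K_1 \cup K_3$; since $G$ is $(2K_1 \cup K_3)$-free, this is impossible, so $G$ is $C_7$-free with no exception arising. Next I would turn to $C_5$: if $G$ contains an induced $C_5$, then by Lemma~\ref{l5}(2) it contains an induced $H_i$ for some $i \in \{2, \dots, 7\}$. For each $i \in \{2,3,4,5,7\}$ I would again read off an induced copy of $2K_1 \cup K_3$ inside $H_i$ from Figure~\ref{figH}, ruling these out immediately. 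This leaves the single genuinely delicate case $i = 6$, where $H_6$ is isomorphic to $E_1$ and contains no induced $2K_1 \cup K_3$.

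The core of the argument is therefore the analysis of a graph $G$ satisfying the hypotheses and containing an induced copy of $E_1$. Fixing such a copy and labelling its vertices, I would examine an arbitrary vertex $v \in V(G) \sm V(E_1)$ and determine the admissible traces $N(v) \cap V(E_1)$: claw-freeness forbids $v$ from missing an independent pair within a suitable triangle of the copy, while $(2K_1 \cup K_3)$-freeness forbids any two outside vertices from being jointly anticomplete to a triangle of the copy. Combining these two restrictions should leave only a short list of attachment types. Propagating the constraints—using connectedness to reach every vertex and the bound $\alpha(G) \ge 4$ to control how independent sets interact—I expect a finite branching analysis to show that the only graphs that arise are exactly $E_1, \dots, E_8$. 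Since $G$ is assumed distinct from all of these, the resulting contradiction establishes that $G$ is also $C_5$-free.

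The main obstacle is clearly this last step. The inspection of the figures settles the $C_7$ case and the cases $i \in \{2,3,4,5,7\}$ almost immediately; the real work is the bookkeeping of the possible neighbourhoods of outside vertices relative to an induced $E_1$, together with the verification that the branching terminates in precisely the eight listed graphs and produces no further graph satisfying the hypotheses.
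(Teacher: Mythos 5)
Your reduction is exactly the paper's, phrased contrapositively: by Lemma~\ref{l5}, an induced $C_7$ forces an induced $H_1$, an induced $C_5$ forces one of $H_2,\ldots,H_7$, and every $H_i$ with $i \neq 6$ contains an induced $2K_1 \cup K_3$, so everything comes down to the single case of an induced $H_6 \cong E_1$. Up to that point your argument is correct and coincides with the paper's proof, which states the same facts in the form: $G$ is $\{H_1,\ldots,H_5,H_7\}$-free because it is $(2K_1\cup K_3)$-free, and it then suffices to show that $G$ is $H_6$-free.

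At precisely that point, however, your proposal stops being a proof. The classification of graphs that satisfy the hypotheses and contain an induced $H_6$ \emph{is} the substance of the lemma --- it is the only reason the exceptions $E_1,\ldots,E_8$ exist --- and you only assert that a ``finite branching analysis'' is expected to yield exactly these eight graphs. Nothing in your sketch shows that the branching is finite (one must rule out the existence of a fourth vertex outside the copy of $H_6$), nor that it produces no graph other than the listed ones; asserting this is assuming the conclusion. The paper does this work in three concrete stages: (i) a case analysis over which pairs of the independent set $I \subseteq V(H_6)$ an outside vertex $x$ misses, using claw-freeness and $(2K_1\cup K_3)$-freeness \emph{within} $H \cup \{x\}$, showing there are only three attachment types (Figure~\ref{figH5andx}); (ii) a proof that every vertex of $G$ attaches to $H$, that two outside vertices cannot have the same trace on $H$, and that only three pair configurations survive (Figure~\ref{figH5plus2}); (iii) the conclusion that three outside vertices force $G = E_8$ exactly and no further vertex can be added, so distinctness from $E_1,\ldots,E_8$ gives the contradiction. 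Two smaller inaccuracies in your sketch of the constraints: in stage (i) the two isolated vertices of the forbidden $2K_1\cup K_3$ are taken \emph{inside} $H_6$ (vertices of $I$ non-adjacent to $x$), not as two outside vertices; and the hypothesis $\alpha(G)\geq 4$ plays no role in this part of the argument --- the copy of $H_6$ itself supplies the needed independent set. Until stages (i)--(iii) are actually carried out, the lemma is not proved.
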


\begin{lemma}
\label{l7}
Let $\GG$ be the class defined in Theorem~\ref{t1},
and let $X$ be a graph such that $X$ is not an induced subgraph of any of 
$P_6$, $K_1 \cup P_5$, $2P_3$, $Z_2$, $K_1 \cup Z_1$, $2K_1 \cup K_3$.
Then infinitely many graphs of $\GG$ contain an induced odd hole.
\end{lemma}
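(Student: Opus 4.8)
The plan is to build, for the given $X$, an explicit infinite family of graphs lying in $\GG$ and each containing an induced odd hole; by the Strong Perfect Graph Theorem (Theorem~\ref{tSPGT}) such graphs are imperfect, which is all that is required. The guiding principle is that a family $\mathcal F$ of graphs, none of which contains an induced copy of a fixed graph $O$, automatically consists of $X$-free graphs for every $X$ having $O$ as an induced subgraph: if some member of $\mathcal F$ contained an induced $X$, it would contain an induced $O$. Hence it suffices to identify the finitely many minimal graphs $O$ that are not induced subgraphs of any of $P_6$, $K_1\cup P_5$, $2P_3$, $Z_2$, $K_1\cup Z_1$, $2K_1\cup K_3$, and to produce, for each such minimal obstruction $O$, one infinite family in $\GG$ that is $O$-free; the hypothesis then guarantees that $X$ contains some $O$, and the corresponding family proves the lemma for that $X$.

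First I would dispose of the case in which $X$ itself contains an induced $\claw$. Then every $\claw$-free graph is $X$-free, so it is enough to exhibit infinitely many connected $\claw$-free graphs, distinct from odd cycles, with independence number at least $4$ and an induced odd hole. Here the simplest family works: take a long odd cycle $C_{2k+1}$ and add one vertex adjacent to exactly two consecutive cycle vertices (a ``triangle ear''). A direct check shows these graphs are $\claw$-free, contain the induced odd hole $C_{2k+1}$, are not cycles, and have independence number growing with $k$. From now on I may assume $X$ is $\claw$-free, and I would classify the minimal obstructions according to whether they contain a triangle and according to their independence number.

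The triangle-ear family in fact serves for a whole range of obstructions at once: each of its members has clique number $3$, exactly one triangle, and no induced cycle other than the single long odd hole. Consequently it is simultaneously free of $K_4$, of the diamond ($K_4$ minus an edge), of $C_4$, and (choosing $2k+1$ large) of every fixed hole $C_\ell$, so it handles every $X$ containing any of these. At the opposite extreme stands the independence obstruction $5K_1$: to avoid it I would use a clique blow-up of a fixed odd cycle, for instance $C_9$ with one vertex expanded into a clique of unbounded size and the remaining vertices left single. Blowing up into cliques keeps the construction $\claw$-free and keeps the independence number equal to that of $C_9$, namely $4$; it still contains the induced odd hole $C_9$ and is not a cycle. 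This family is $5K_1$-free and thus covers every $X$ with $\alpha(X)\geq 5$.

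What remains are the $\claw$-free obstructions $O$ with $\alpha(O)\leq 4$ that contain no $K_4$, no diamond and no hole --- that is, the chordal, $K_4$-free, diamond-free, $\claw$-free graphs, a restricted family of ``triangle/edge chains'' (disjoint unions of paths with some edges thickened into triangles). After passing to the minimal ones, these form a short explicit list of cohesive graphs such as $K_2\cup K_3$, $K_2\cup 3K_1$, $K_2\cup P_4$ and $K_1\cup Z_2$. Each such $O$ calls for a family in which every edge and every triangle lies close to the rest of the graph, so that no induced copy of $O$ can be separated off, while the family must still retain an odd hole and independence number at least $4$. I would obtain these by threading a short odd hole ($C_5$ or $C_7$) through a structure dominated by a few large cliques, choosing the attachments so that claw-freeness, the presence of the odd hole, independence number at least $4$, and not being a cycle all hold, yet the particular $O$ cannot appear. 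The main obstacle is precisely here: independence number at least $4$ forces the four-element independent set to be spread out, whereas avoiding a cohesive obstruction such as $K_2\cup K_3$ or $K_1\cup Z_2$ forces the edges and triangles to be concentrated near that independent set. Reconciling these two demands is what makes the cohesive constructions delicate; together with the finite but careful check that the list of minimal obstructions is complete and that each is avoided by one of the families, this is where the real work of the lemma lies.
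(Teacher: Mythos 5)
Your overall strategy coincides with the paper's: show that the hypothesis forces $X$ to contain one of finitely many obstruction graphs as an induced subgraph, and for each obstruction exhibit an infinite family of graphs in $\GG$ avoiding it. (The paper's obstruction list is $5K_1$, $3K_1 \cup K_2$, $3K_2$, $K_2 \cup P_4$, $\claw$, $C_4$--$C_7$, $K_4$, the diamond $D$, the hourglass $H$, the bull $B$, $K_2 \cup K_3$, $K_1 \cup Z_2$; its family $\FF_1$ is exactly your triangle-ear family, and its families $\FF_3$, $\FF_4$ are inflation-type constructions in the spirit of your $C_9$ blow-up.) The parts you actually carry out are sound: the triangle-ear family disposes of any $X$ containing $\claw$, $K_4$, the diamond or a hole (and, via your ``exactly one triangle'' remark, also the hourglass), and the clique blow-up of $C_9$ disposes of $5K_1$.

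However, there are two genuine gaps. First, your list of remaining ``cohesive'' obstructions is offered only as ``such as'' and is incomplete: it misses at least $3K_2$ and, more seriously, the bull $B$ (a triangle with two pendant edges at distinct vertices), which satisfies the hypothesis of the lemma and is avoided by none of the families you construct --- in your triangle-ear graph the set $\{x, v_1, v_2, v_3, v_{2k+1}\}$ (where $x$ is the ear vertex attached along the edge $v_1 v_2$) induces a bull, so $X = B$ would defeat both of your families; the paper needs a separate family $\FF_2$ for this, namely $C_\ell$ ($\ell \geq 9$ odd) plus a vertex whose neighbourhood on the cycle induces $P_3$. Second, for the cohesive obstructions ($K_2 \cup K_3$, $3K_1 \cup K_2$, $3K_2$, $K_2 \cup P_4$, $K_1 \cup Z_2$) you give only a heuristic (``threading a short odd hole through a structure dominated by a few large cliques'') and explicitly concede that making it work ``is where the real work of the lemma lies.'' That work is precisely what the paper supplies with the explicit families $\FF_3$ (simultaneously $\{5K_1, 3K_1 \cup K_2, K_1 \cup Z_2, 3K_2, K_2 \cup P_4\}$-free) and $\FF_4$ ($K_2 \cup K_3$-free), obtained by repeatedly duplicating one distinguished vertex of a fixed small graph containing an odd hole. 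Without these constructions, and without the verified completeness of the obstruction list, what you have is a correct plan whose hardest cases remain open, not a proof.
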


We also recall the following fact observed in~\cite{alpha3}.

\begin{obs}[Brause et al.~\cite{alpha3}]
\label{o8}
Let $G$ be a $\claw$-free graph and $C$ be a set of its vertices
such that $C$ induces a cycle of length at least $5$.
If $x$ is a vertex that does not belong to $C$ but is adjacent to a vertex of $C$, then 
$N(x) \cap C$ induces $K_2$ or $P_3$ or $P_4$,
or (in case $|C| = 5$) it can induce $C_5$,
or (in case $|C| \geq 6$) it can induce $2K_2$. 
\end{obs}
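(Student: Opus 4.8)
The plan is to exploit the hypothesis that $G$ is $\claw$-free in two complementary ways and then to reduce the statement to an elementary enumeration of the subsets of a cycle that these constraints permit. Throughout, write $S = N(x) \cap C$ and let $v_1, \ldots, v_n$ (indices modulo $n$) denote the vertices of the cycle induced by $C$, where $n \geq 5$. First I would record the global consequence of claw-freeness: since $x$ has no three pairwise non-adjacent neighbours, the set $N(x)$ — and hence its subset $S$ — induces a graph of independence number at most~$2$.

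The second, and crucial, step is to establish that $G[S]$ has no isolated vertex. To this end, fix $v_i \in S$ and consider the three neighbours $x, v_{i-1}, v_{i+1}$ of $v_i$. Because $n \geq 5$, the vertices $v_{i-1}$ and $v_{i+1}$ lie at distance~$2$ on the cycle and are therefore non-adjacent; avoiding a claw centred at $v_i$ then forces $x$ to be adjacent to $v_{i-1}$ or to $v_{i+1}$, that is, $v_{i-1} \in S$ or $v_{i+1} \in S$. Thus every vertex of $S$ has a cycle-neighbour in $S$.

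It remains to combine these two facts. Since $S \subseteq V(C)$ and $C$ induces a chordless cycle, the only edges of $G$ inside $S$ are arcs of the cycle, so $G[S]$ is either the whole cycle (when $S = V(C)$) or a disjoint union of paths; by the no-isolated-vertex fact each such path has at least two vertices. If $S = V(C)$, then $G[S] = C_n$ has independence number $\lfloor n/2 \rfloor$, so the bound $\alpha \leq 2$ forces $n = 5$ and yields the case $C_5$, possible only when $|C| = 5$. Otherwise $G[S]$ is a disjoint union of $k$ paths $P_{a_1}, \ldots, P_{a_k}$ with each $a_i \geq 2$ and $\sum_i \lceil a_i/2 \rceil \leq 2$. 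This immediately gives $k \leq 2$: a single path forces $a_1 \leq 4$, hence $K_2$, $P_3$ or $P_4$; two paths force each $a_i = 2$, hence $2K_2$. Finally, two disjoint arcs of two vertices each cannot be carved out of a cycle on fewer than six vertices, which is why $2K_2$ arises only when $|C| \geq 6$; on $C_5$ any four-vertex subset is a single arc inducing $P_4$.

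The enumeration in the last paragraph is entirely routine, so I expect the only real content to lie in the no-isolated-vertex claim — in particular the observation that $v_{i-1}$ and $v_{i+1}$ are non-adjacent precisely because the cycle has length at least~$5$, which is exactly what makes the local claw argument applicable at every vertex of $S$ and thereby eliminates the configurations $K_1$, $2K_1$ and $K_1 \cup K_2$ that the independence bound alone would still allow.
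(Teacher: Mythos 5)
Your proof is correct and complete: the two claw-freeness consequences (independence number of $N(x)\cap C$ at most $2$, and no isolated vertex in $G[S]$ via the claw centred at $v_i$ with leaves $x, v_{i-1}, v_{i+1}$) together with the fact that the cycle is chordless do yield exactly the stated list, including the correct length restrictions for $C_5$ and $2K_2$. Note that the paper itself gives no proof of this observation --- it is recalled from the cited earlier work of Brause et al. --- but your argument is the natural one and fully establishes the statement.
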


The proofs of Lemmas~\ref{l5}, \ref{l6} and~\ref{l7} are included below.
Assuming the lemmas are true, we prove Theorem~\ref{t1}.

\begin{proof}[Proof of Theorem~\ref{t1}]
We assume that $X$ belongs to $\XX \cup \{ 2K_1 \cup K_3 \}$,
and prove statements (1) and (2).
First, we show that $G$ is $\{ C_9, C_{11}, \ldots \}$-free.
For the sake of a contradiction, we suppose that $G$ contains an induced $C_{\ell}$
(where $\ell \geq 9$ and $\ell$ is odd).  
Clearly, $G$ contains each of the graphs $P_6, K_1 \cup P_5, 2P_3$
as an induced subgraph.
Since $G$ is connected and distinct from $C_{\ell}$,
it contains an additional vertex which is adjacent to a vertex of this $C_{\ell}$.
Using Observation~\ref{o8}, we conclude that $G$ also contains  $Z_2$, $K_1 \cup Z_1$, and $2K_1 \cup K_3$ as induced subgraphs, a contradiction.

Next, we show that $G$ is $\{ C_5, C_7 \}$-free.
For the case when $X$ belongs to $\XX$,
we observe that each of the graphs $H_1, \ldots, H_7$
(depicted in Figure~\ref{figH}) contains $X$ as an induced subgraph.
In particular, $G$ is $\{ H_1, \ldots, H_7 \}$-free,
and thus $\{ C_5, C_7 \}$-free by Lemma~\ref{l5}.
For the case when $X$ is $2K_1 \cup K_3$,
we conclude that $G$ is $\{ C_5, C_7 \}$-free by Lemma~\ref{l6}.

In particular, we can now use the fact that $G$ is $C_5$-free as follows.
Since $G$ satisfies the assumptions of Lemma~\ref{lBR},
we get that $G$ cannot contain an odd antihole as an induced subgraph
(if $G$ contained an induced odd antihole, then it would contain
induced $C_5$, contradicting the fact that $G$ is $C_5$-free).

Consequently, $G$ contains neither an odd hole 
nor an odd antihole as an induced subgraph,
and thus $G$ is a perfect graph by Theorem~\ref{tSPGT}.

To conclude the proof, we observe that statement (3) follows by Lemma~\ref{l7}. 
\end{proof}

In the remainder of the present section,
we prove Lemmas~\ref{l5},~\ref{l6}, and~\ref{l7}.

\begin{proof}[Proof of Lemma~\ref{l5}]
We prove the lemma by considering a minimal counterexample.
In particular, for each of the two statements,
we consider a graph $G$ which satisfies the assumptions of the statement
and contains an induced $C_\ell$
(where $\ell = 7, 5$ for statement (1), (2), respectively)
and, subject to these properties,
has a minimal number of vertices.

We let $C$ be a set of vertices inducing $C_\ell$ in $G$,
and $N(C)$ be the set of all vertices not belonging to $C$
but adjacent to a vertex of $C$.
We let $I$ be a maximum independent set of $G$,
and $e$ be the number of edges going from $C$ to $I \sm C$.
Before proving the statements, we show three claims on basic properties of $G$.

\begin{claim} 
\label{cd2}
If $x$ is a vertex with a neighbour in $C$ and a neighbour outside $C \cup N(C)$,
then $N(x)\cap C$ induces $K_2$. 
\end{claim}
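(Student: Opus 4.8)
The plan is to apply Observation~\ref{o8} to restrict the possibilities for $N(x)\cap C$, and then to rule out every possibility except $K_2$ by forcing an induced $\claw$ centred at $x$, using the hypothesised neighbour of $x$ that lies outside $C \cup N(C)$.

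First I would check that $x \notin C$. Indeed, every neighbour of a vertex of $C$ lies in $C \cup N(C)$ by the definition of $N(C)$, so a vertex of $C$ has no neighbour outside $C \cup N(C)$; since $x$ has such a neighbour by hypothesis, we get $x \in N(C)$. Hence Observation~\ref{o8} applies to $x$, and $N(x)\cap C$ induces one of $K_2$, $P_3$, $P_4$, or (when $|C| = 5$) $C_5$, or (when $|C| \geq 6$) $2K_2$.

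Next I would fix a neighbour $y$ of $x$ with $y \notin C \cup N(C)$ and record the key fact that $y$ is non-adjacent to every vertex of $C$ (otherwise $y$ would belong to $N(C)$). Now I would treat the non-$K_2$ cases uniformly: in each of $P_3$, $P_4$, $C_5$, and $2K_2$, the subgraph induced by $N(x)\cap C$ contains two non-adjacent vertices $u$ and $v$ --- two endpoints of the path in the $P_3$ and $P_4$ cases, two vertices at distance $2$ along the cycle in the $C_5$ case, and two vertices from distinct components in the $2K_2$ case. Then $u$, $v$, $y$ are pairwise non-adjacent, since $u \not\sim v$ by the choice and $y$ is adjacent to neither $u$ nor $v$ (both lying in $C$), while $x$ is adjacent to all three. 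This yields an induced $\claw$ centred at $x$, contradicting that $G$ is $\claw$-free. The only surviving possibility is that $N(x)\cap C$ induces $K_2$, which is exactly the claim.

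I do not anticipate a genuine obstacle here; the argument is a direct claw-forcing once the structural list from Observation~\ref{o8} is in hand. The one point that needs care is the remark that the external neighbour $y$ is non-adjacent to all of $C$, since this is precisely what converts any pair of non-adjacent vertices of $N(x)\cap C$ into the two remaining leaves of a claw at $x$.
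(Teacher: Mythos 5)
Your proof is correct and uses essentially the same idea as the paper: the external neighbour $y$ (non-adjacent to all of $C$) turns any two non-adjacent vertices of $N(x)\cap C$ into a claw centred at $x$, which forces $N(x)\cap C$ to induce $K_2$. The paper states this more tersely (it directly notes that $N(x)\cap C$ can contain no two non-adjacent vertices and cannot be a single vertex) rather than enumerating the cases of Observation~\ref{o8}, but the substance is identical.
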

\begin{proofcl}[Proof of Claim~\ref{cd2}]
Since $G$ is $\claw$-free, the set $N(x)\cap C$ cannot contain two 
non-adjacent vertices, and $|N(x)\cap C| \neq 1$.
It follows that $N(x)\cap C$ induces~$K_2$.
\end{proofcl}
\begin{claim} 
\label{ctype3}
For every vertex $x$ of $G$, the set $N(x) \cap C$ does not induce $P_3$.
\end{claim}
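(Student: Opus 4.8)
The plan is to argue from the minimality of $G$ by deleting a single well-chosen vertex. Suppose, for contradiction, that some vertex $x$ has $N(x)\cap C$ inducing $P_3$. Since $C$ induces a chordless cycle, the three vertices of $N(x)\cap C$ must be consecutive on $C$; I write them as $c_{i-1},c_i,c_{i+1}$, so that $x$ is adjacent to exactly these three vertices of $C$ and, in particular, $x\notin C$ and $x\sim c_i$. The whole argument then hinges on this last adjacency: because $x\sim c_i$, the fixed maximum independent set $I$ cannot contain both of them, so $I$ omits $x$ or omits $c_i$. I would treat these two possibilities separately, deleting in each case the omitted vertex to obtain a smaller graph that still meets every hypothesis of the lemma and still contains an induced $C_\ell$, contradicting the choice of $G$.

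Suppose first that $I$ omits $x$, and consider $G-x$. It inherits claw-freeness and the forbidden-subgraph hypothesis of the lemma, it still contains the induced cycle $C$ (which avoids $x$), and its independence number is at least $4$ since $I$ survives. For connectivity I would appeal to Claim~\ref{cd2}: as $N(x)\cap C$ is $P_3$ and not $K_2$, the vertex $x$ has no neighbour outside $C\cup N(C)$, so every neighbour of $x$ lies in $C\cup N(C)$; this set is connected to $C$ in $G-x$, and since $G$ is connected it follows that $G-x$ is connected as well. Thus $G-x$ is a smaller counterexample.

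Suppose instead that $I$ omits $c_i$, and consider $G-c_i$. Again claw-freeness, the forbidden-subgraph hypothesis, and the bound $\alpha\ge 4$ (witnessed by $I$) are inherited. Here I would produce the required induced $C_\ell$ by replacing $c_i$ with $x$ in $C$: the set $(C\sm\{c_i\})\cup\{x\}$ induces a cycle of length $\ell$ precisely because $x$ is adjacent to $c_{i-1}$ and $c_{i+1}$ and to no other vertex of $C$. Connectivity of $G-c_i$ follows from claw-freeness, since $c_{i-1}\not\sim c_{i+1}$ forces every neighbour of $c_i$ to be adjacent to $c_{i-1}$ or $c_{i+1}$, so no vertex is separated from the path $C\sm\{c_i\}$. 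This again yields a smaller counterexample, completing the contradiction. I expect the independence-number requirement to be the only real obstacle, because no single fixed deletion is guaranteed to preserve it; the adjacency $x\sim c_i$ is exactly what resolves this, since it forces at least one of the two candidate deletions to leave a maximum independent set intact.
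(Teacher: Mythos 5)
Your proof is correct and takes essentially the same approach as the paper: the paper also deletes one of $x$ or the centre $y = c_i$ of the $P_3$, using the adjacency $x y$ to guarantee that at least one of the two deletions keeps the maximum independent set intact, with connectivity following from claw-freeness and the induced $C_\ell$ surviving (by substituting $x$ for $c_i$ in the second case). Your explicit rerouting of the cycle and the connectivity details via Claim~\ref{cd2} are just a more spelled-out version of what the paper leaves implicit.
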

\begin{proofcl}[Proof of Claim~\ref{ctype3}]
For the sake of a contradiction, we suppose that there is a vertex $x$
whose neighbours in $C$ induce $P_3$.
We let $y$ be the central vertex of this $P_3$,
and we consider the graphs $G - x$ and $G - y$.
We note that both considered graphs are connected
(since $G$ is $\claw$-free),
and both contain an induced $C_\ell$.
Furthermore, at least one of the considered graphs is of independence at least~$4$
(since $x$ and $y$ cannot both belong to $I$),
and we conclude that this graph contradicts the choice of $G$ as a minimal counterexample.
\end{proofcl}
\begin{claim} 
\label{cineq}
We have $e \leq 2\ell - 4|I \cap (C \cup N(C))| + 4|I \cap N(C)|$.
\end{claim}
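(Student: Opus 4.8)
\emph{Setup and reduction.} The plan is to count $e$ locally, vertex by vertex along the cycle. First I would simplify the right-hand side: since $C$ and $N(C)$ are disjoint, we have $|I \cap (C \cup N(C))| = |I \cap C| + |I \cap N(C)|$, so the target inequality collapses to
\[
 e \leq 2\ell - 4|I \cap C|.
\]
Next I would observe that every edge counted by $e$ actually goes to $I \cap N(C)$: a vertex of $I \sm C$ that has a neighbour in $C$ is, by definition, a vertex of $N(C)$. Writing $A = I \cap N(C)$ and, for each $c \in C$, letting $d(c) = |N(c) \cap A|$, we get $e = \sum_{c \in C} d(c)$. Moreover $d(c) = 0$ whenever $c \in I \cap C$, since $A \subseteq I$ is independent; hence $e = \sum_{c \in C \sm I} d(c)$.

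\emph{The key local bound.} The crux of the argument is a single claw-freeness estimate. Fix $c \in C \sm I$, and let $j(c)$ denote the number of vertices of $I \cap C$ adjacent to $c$ (these are cycle-neighbours of $c$, so $j(c) \in \{0,1,2\}$). The neighbours of $c$ that lie in $(I \cap C) \cup A$ all belong to the independent set $I$, so they are pairwise non-adjacent; there are $j(c) + d(c)$ of them. Since $G$ is $\claw$-free, $c$ cannot have three pairwise non-adjacent neighbours, and therefore
\[
 j(c) + d(c) \leq 2, \qquad \text{i.e.} \qquad d(c) \leq 2 - j(c).
\]
This is the whole point: folding the cycle-neighbours in $I \cap C$ and the outside neighbours in $A$ into one independent neighbourhood of $c$ is what produces the stronger bound. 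I expect this unification to be the main obstacle to discover, because the naive count $d(c) \leq 2$ (which also follows from $\claw$-freeness) only yields $e \leq 2(\ell - |I \cap C|) = 2\ell - 2|I \cap C|$, and one must see how adjacency to $I$ on the cycle forces additional savings to reach the factor $4$.

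\emph{Summation.} Finally I would sum the local bound and evaluate $\sum_{c \in C \sm I} j(c)$. This sum counts the cycle-edges with one endpoint in $I \cap C$ and the other in $C \sm I$. Each vertex of $I \cap C$ is incident with exactly two cycle-edges, and both of their other endpoints lie in $C \sm I$ because the vertices of $I \cap C$ are pairwise non-adjacent (so no cycle-edge joins two of them); hence $\sum_{c \in C \sm I} j(c) = 2|I \cap C|$. Combining everything,
\[
 e = \sum_{c \in C \sm I} d(c) \leq \sum_{c \in C \sm I}\bigl(2 - j(c)\bigr)
 = 2(\ell - |I \cap C|) - 2|I \cap C| = 2\ell - 4|I \cap C|,
\]
which is exactly the reduced inequality and hence the claim. (The degenerate case $I \cap C = \emptyset$ is already covered, as then the bound reads $e \leq 2\ell$.)
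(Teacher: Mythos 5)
Your proof is correct and follows essentially the same route as the paper: both count, for each vertex $c$ of $C$, its neighbours in $I \sm C$, and both use claw-freeness to bound the number of pairwise non-adjacent neighbours of $c$ by two, so that each neighbour of $c$ in $I \cap C$ costs one unit of the contribution to $e$. Your bound $d(c) \leq 2 - j(c)$ is just a compact formalisation of the paper's case analysis (``if $x$ is adjacent to a vertex of $I \cap C$, then $x$ has at most one neighbour in $I \sm C$,'' etc.), and the final summation is identical.
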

\begin{proofcl}[Proof of Claim~\ref{cineq}]
We recall that $I$ is an independent set.
We consider a vertex $x$ of $C$ and discuss the number of its neighbours in $I \sm C$,
that is, the contribution to the quantity $e$. 
Clearly, $x$ is adjacent to at most two vertices of $I$
(since $G$ is $\claw$-free).
Furthermore, if $x$ is adjacent to a vertex of $I \cap C$,
then $x$ has at most one neighbour in $I \sm C$.
Similarly, if $x$ is adjacent to two vertices of $I \cap C$ or $x$ belongs to $I$,
then $x$ has no neighbour in $I \sm C$.
Consequently, we get that $e \leq 2\ell - 4|I \cap C|$,
and the desired inequality follows since
$|I \cap C| = |I \cap (C \cup N(C))| - |I \cap N(C)|$.
\end{proofcl}
We use Claims~\ref{cd2}, \ref{ctype3} and~\ref{cineq}, and show statements (1) and (2).
First, we consider a graph $G$ chosen as a minimal counterexample to statement (1),
and a set $C$ inducing $C_7$ in $G$.
We note that Claim~\ref{cd2} implies that every vertex of $V(G) \sm C$ is adjacent to a vertex of $C$
(since $G$ is connected and $H_1$-free).
In particular, we have $I \cap N(C) = I \sm C$.
Furthermore, every vertex of $V(G) \sm C$ has precisely four neighbours in $C$
(by combining the fact that $G$ is $H_1$-free together with Claim~\ref{ctype3} and Observation~\ref{o8}).
In particular, we consider the vertices of $I \sm C$,
and conclude that $e = 4|I \sm C|$.
On the other hand, Claim~\ref{cineq} yields that $e \leq 14 - 16 + 4|I \cap N(C)| = 4|I \sm C| - 2$, a contradiction.

Next, we consider a minimal counterexample $G$ for statement (2),
and a set $C$ inducing $C_5$.
We first show that every vertex of $V(G) \sm C$ is adjacent to a vertex of $C$.
For the sake of a contradiction,
we suppose that there is a vertex, say~$w$, which has no neighbour in $C$.
We observe that $w$ is precisely at distance two from $C$ 
(using Claim~\ref{cd2} and the facts that $G$ is connected and $H_2$-free).
Furthermore, the graph $G - w$ is connected and it contains an induced $C_5$,
and hence $w$ belongs to~$I$ and $|I| = 4$
(since $G$ is a minimal counterexample).
We let $u$ be a vertex adjacent to $w$ and to a vertex of $C$.
In particular, $u$ does not belong to $I$ and $G - u$ contains an induced $C_5$. 
We use the fact that $G$ is a minimal counterexample
and note that the graph $G - u$ is not connected,
and furthermore
$G - u$ has precisely two components one of which consists only of the vertex $w$
(since $G$ is $\claw$-free and minimal).
Consequently,
we observe that $u$ is the only vertex of $G$ whose neighbours in $C$ induce $K_2$
(since $G$ is $\{ H_3, H_4, H_5 \}$-free
and the graphs depicted in Figure~\ref{figNotH345} are not $\claw$-free). 
\begin{figure}[ht]
\centering
\includegraphics[scale=0.7]{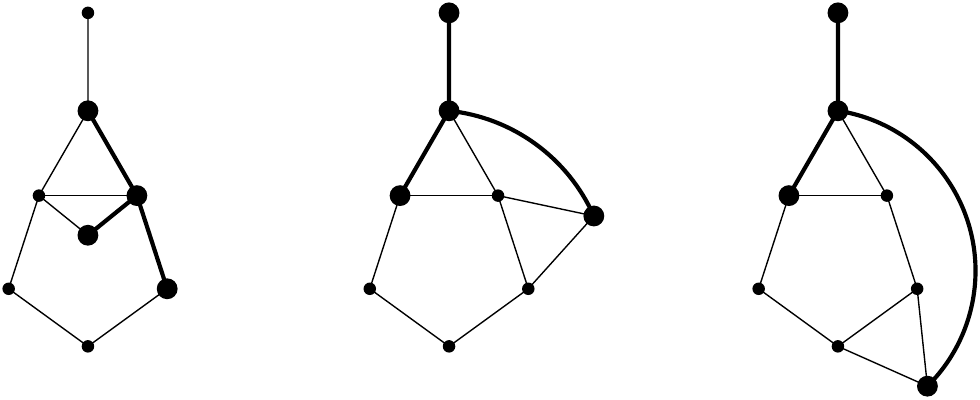}
    \caption{Possible graphs induced by $C \cup \{u,w,z\}$,
    where $z$ is another vertex whose neighbours in $C$ induce $K_2$,
    and distinct from $H_3, H_4$ and $H_5$.
    We note that the graphs are not $\claw$-free
    (induced copies of $\claw$ are depicted in bold).}
    \label{figNotH345}
\end{figure}
It follows that every vertex of $I \cap N(C)$ has at least four neighbours in~$C$ 
(by Claim~\ref{ctype3} and Observation~\ref{o8}),
and thus $e \geq 4|I \cap N(C)|$.
On the other hand,
we note that $w$ is the only vertex of $I$ which has no neighbour in $C$
(by Claim~\ref{cd2}).
Thus, we have $|I \cap (C \cup N(C))| = 3$,
and Claim~\ref{cineq} gives that
$e \leq 10 - 12 + 4|I \cap N(C)| = 4|I \cap N(C)| - 2$, a contradiction.

\begin{figure}[ht]
\centering
\includegraphics[scale=0.7]{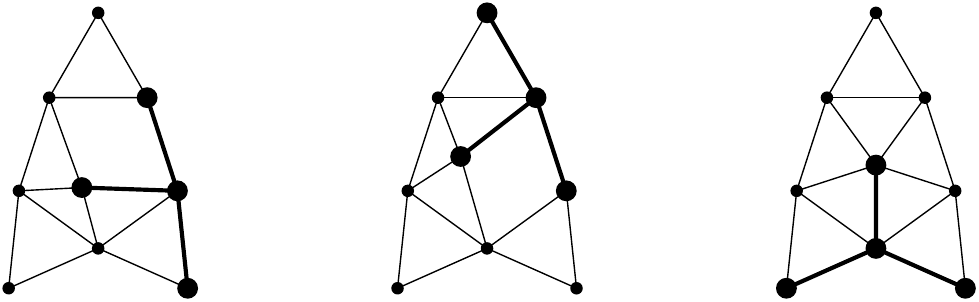}
    \caption{Possible graphs induced by $C \cup I$ which are distinct from $H_7$.
    Each graph contains an induced $\claw$
    (depicted in bold).}
    \label{figNotH7}
\end{figure}

Hence, every vertex of $V(G) \sm C$ is adjacent to a vertex of $C$.
In particular, every vertex of $V(G) \sm C$ belongs to $I$
(since $G$ is a minimal counterexample).
We recall that $G$ is $\{ H_6, H_7 \}$-free, and we observe that at most two vertices of $I \sm C$
have the property that its neighbourhood in $C$ induces $K_2$
(see Figure~\ref{figNotH7}).
Similarly as above, Claim~\ref{ctype3} and Observation~\ref{o8} imply that
$e \geq 4|I \sm C| - 4$.
However, Claim~\ref{cineq} yields that
$e \leq 10 - 16 + 4|I \sm C| = 4|I \sm C| - 6$,
a contradiction.
\end{proof}

\begin{proof}[Proof of Lemma~\ref{l6}]
We consider such graph $G$
and note that $G$ is $\{ H_1, \dots, H_5 \}$-free and $H_7$-free
(since it is $2K_1 \cup K_3$-free).

We suppose that 
$G$ is distinct from $E_1, \ldots, E_8$ (depicted in Figure~\ref{figExceptions}),
and show that $G$ is $H_6$-free.
For the sake of a contradiction, we suppose that $G$ contains a set $H$
of vertices inducing $H_6$.
We note that $G$ contains a vertex, say~$x$,
which does not belong to $H$ but is adjacent to a vertex of $H$
(since $G$ is connected and distinct from $E_1$, that is, $H_6$).
We discuss the adjacency of $x$ to the vertices of $H$
and show that there are essentially only three types of connecting $x$ to $H$
(see Figure~\ref{figH5andx}).
To this end,
we consider the set $I$ of four independent vertices of $H$
and the labelling of vertices of $H$ given in Figure~\ref{figH6}.
We note that $x$ is non-adjacent to at least two vertices of $I$
(since $G$ is $\claw$-free),
and we discuss the cases given by pairs of vertices of~$I$.
For each case, we use the assumption that $G$ is $\{ \claw, 2K_1 \cup K_3 \}$-free.
By symmetry, we need to consider four cases as follows.

\begin{figure}[ht]
\centering
\includegraphics[scale=0.75]{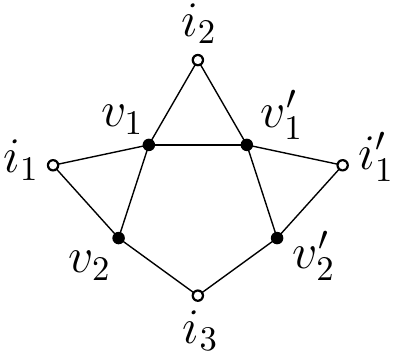}
    \caption{Labelling the vertices of $H_6$.
    The vertices of $I$ are labelled $i_1, i_1', i_2, i_3$.}
    \label{figH6}
\end{figure}

First, we show that if $x$ is adjacent to neither $i_2$ nor $i_3$,
then $x$ is adjacent to all remaining vertices of $H$
(this gives type $B$ as depicted in Figure~\ref{figH5andx}).
We use that the set $\{i_2, i_3, v_1, v_1', x \}$ cannot induce
$2K_1 \cup K_3$, and hence we can assume that $x$ is adjacent to $v_1$.
Using the edge $x v_1$, we get that $x$ is also adjacent to $i_1$
(by considering the graph induced by $\{i_1, i_2, v_1, x \}$).
Similarly, $x$ is adjacent to $v_2$.
Consequently, $x$ is adjacent to $i_1'$
(by considering $\{i_1, i_1', i_2, v_2, x \}$),
and similarly it is adjacent to $v_2'$.
Finally, it is adjacent to $v_1'$
(by considering $\{i_3, v_1', v_2', x \}$).

Second, we assume that $x$ is adjacent to neither $i_1'$ nor $i_2$.
Using the previous case, we can assume that $x$ is adjacent to $i_3$.
It follows that $x$ is adjacent to neither $v_1'$ nor $v_2$,
and consequently it is not adjacent to $v_1$.
Finally, $x$ is adjacent to $v_2'$,
and thus it is adjacent to $i_1$
(this gives type $A$).

Third, we suppose that $x$ is adjacent to neither $i_1$ nor $i_3$,
and we can assume that $x$ is adjacent to $i_2$.
We note that $x$ is adjacent to neither $v_2$ nor $v_1'$.
Since $x$ is not adjacent to $v_1'$,
it is adjacent to neither $v_1$ nor $v_2'$.
Hence, $x$ is not adjacent to~$v_2$.
A contradiction follows by considering 
$\{i_1, v_1, v_2, v_2', x \}$.

Fourth, we assume that $x$ is adjacent to neither $i_1$ nor $i_1'$,
and that $x$ is adjacent to $i_2$ and $i_3$.
In addition, we can assume that $x$ is adjacent to $v_2'$
(by considering $\{i_3, v_2, v_2', x \}$).
It follows that $x$ is not adjacent to $v_2$,
and hence adjacent to $v_1$, and thus adjacent to $v_1'$
(this gives type $C$).

With the three types on hand (as depicted in Figure~\ref{figH5andx}),
we continue the argument.
In particular, we note that 
$G$ contains at least two vertices which do not belong to $H$
(since $G$ is distinct from $E_2, E_3$ and $E_4$).

\begin{figure}[ht]
\centering
\includegraphics[scale=0.75]{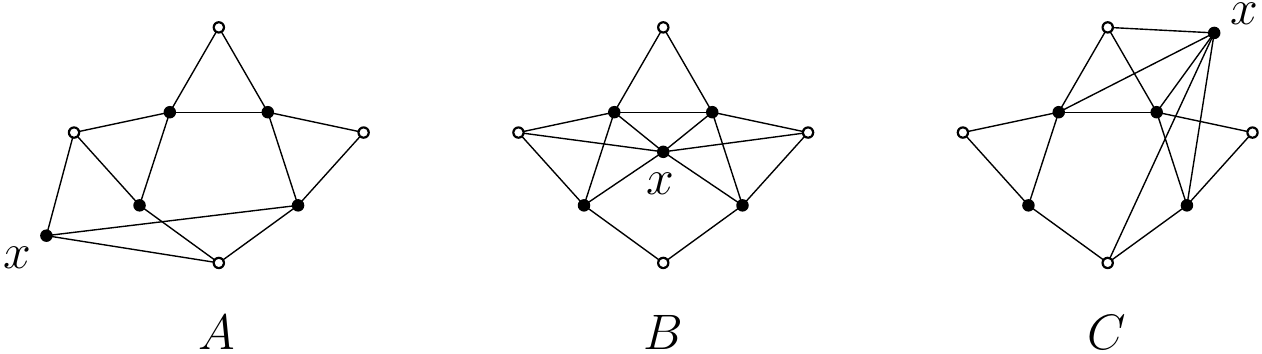}
    \caption{The only three types of connecting $x$ to $H_6$.
	The types are labelled $A, B$ and $C$.    
    The vertices of $I$ are depicted in white.}
    \label{figH5andx}
\end{figure}

In addition, we have that every vertex of $G$ is adjacent to a vertex of $H$
(since $G$ is connected and $\claw$-free, and two non-adjacent vertices of $H$
are adjacent to~$x$).
Hence, these three types (of connecting $x$) apply to every vertex of $V(G) \sm H$.
We consider a pair of such vertices,
and note that their neighbourhoods in $H$ cannot be the same 
(since $G$ is $\{ \claw, 2K_1 \cup K_3 \}$-free).
In particular if adding two vertices of type $A$, then
one has to be adjacent to $i_1, i_3, v_2'$
and the other to $i_1', i_3, v_2$.
We consider the possible graphs obtained by adding two vertices of type $A$
(there are two graphs to consider since the additional vertices may or may not be adjacent),
and we note that none of them is $\{ \claw, 2K_1 \cup K_3 \}$-free.
Similarly, we discuss all remaining cases and observe that
there are precisely three options of
connecting two vertices to $H$ (see Figure~\ref{figH5plus2}).

\begin{figure}[ht]
\includegraphics[scale=0.7]{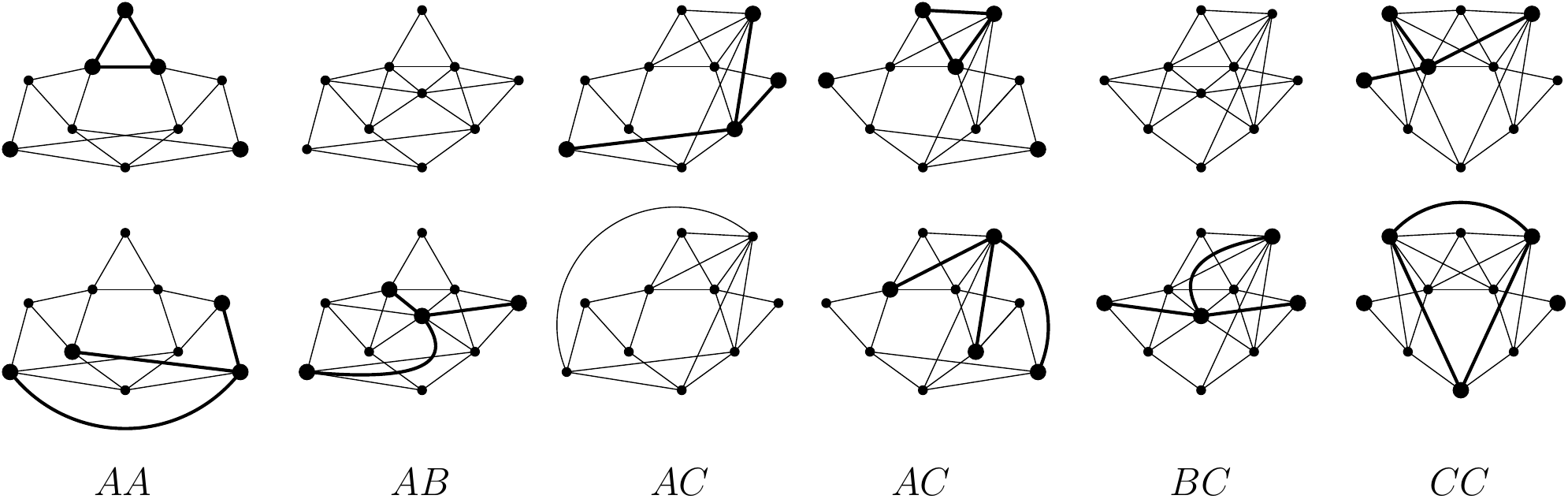}
    \caption{Graphs obtained by adding two vertices to $H_6$.
    The labelling indicates types of the additional vertices. 
	For each pair of the types, the additional vertices
	might be non-adjacent (top) or adjacent (bottom).
    Three of the graphs are $\{ \claw, 2K_1 \cup K_3 \}$-free.
    In the remaining graphs, induced $\claw$ or $2K_1 \cup K_3$ is
    highlighted.}
    \label{figH5plus2}
\end{figure}

Since $G$ is distinct from $E_5, E_6$ and $E_7$, 
it contains at least three vertices which do not belong to $H$.
We consider pairs of such vertices and the above discussion,
and we conclude that $G$ contains precisely three such vertices 
and, in fact, $G$ is exactly $E_8$, a contradiction.
Hence, we can assume that $G$ is $H_6$-free.

Consequently, $G$ is $\{ H_1, \ldots, H_7 \}$-free,
and thus $\{ C_5, C_7 \}$-free by Lemma~\ref{l5}.
\end{proof}

\begin{figure}[ht]
\centering
\includegraphics[scale=0.75]{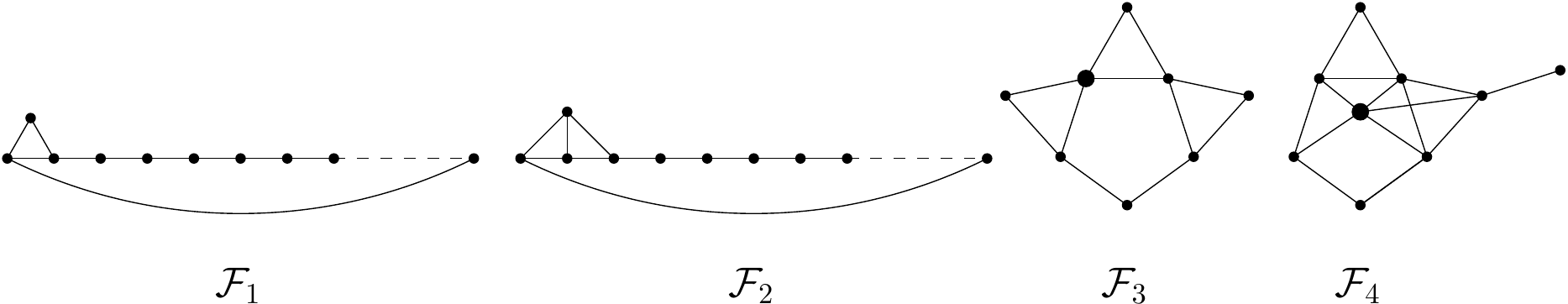}
    \caption{Families $\FF_1, \FF_2, \FF_3, \FF_4$ of graphs.
    The families $\FF_1$ and $\FF_2$ consist of graphs which are obtained
    from $C_{\ell}$ (where $\ell \geq 9$ and $\ell$ is odd)
    by adding a vertex such that the set of its neighbours on $C_{\ell}$
    induces $P_2$, $P_3$, respectively.
    The families $\FF_3$ and $\FF_4$ consist of graphs which are obtained as follows.
    We consider the graph labelled $\FF_3$, $\FF_4$, respectively,
    and its distinguished vertex (depicted as large), 
    and add (in sequence) an arbitrary number of vertices
    adjacent exactly to the distinguished vertex and to its neighbours.}
    \label{figFamilies}
\end{figure}

\begin{figure}[ht]
\centering
\includegraphics[scale=0.7]{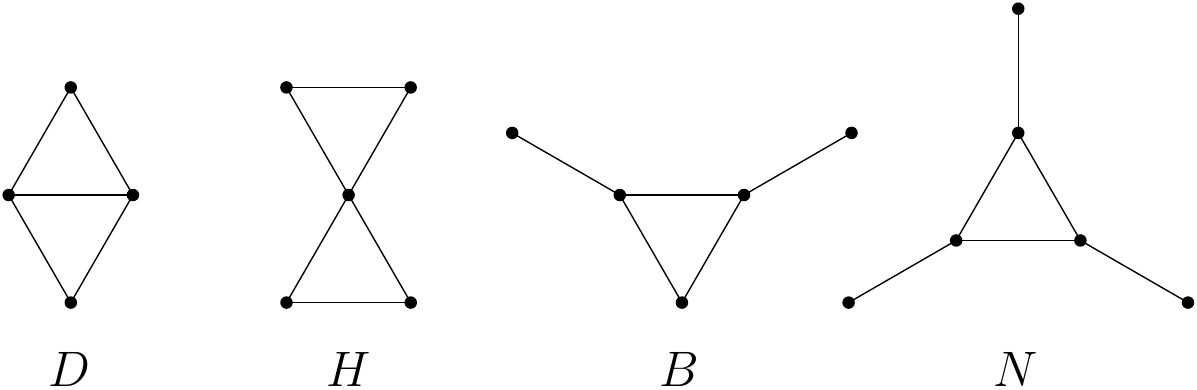}
    \caption{The graphs $D$, $H, B$ and $N$.}
    \label{figDHBN}
\end{figure}

\begin{proof}[Proof of Lemma~\ref{l7}]
We use the assumption that $X$ is not an induced subgraph of any of
$P_6$, $K_1 \cup P_5$, $2P_3$, $Z_2$, $K_1 \cup Z_1$, $2K_1 \cup K_3$,
and we show that $X$ contains at least one of the graphs
$5K_1$, $3K_1 \cup K_2$, $3K_2$, $K_2 \cup P_4$,
$\claw$, $C_4$, $C_5$, $C_6$, $C_7$,
$K_4$, $D$, $H$, $B$, $K_2 \cup K_3$, $K_1 \cup Z_2$ 
as an induced subgraph
(the graphs $D$, $H$ and $B$ are depicted in Figure~\ref{figDHBN}).
Clearly, we can assume that $X$ is a chordal graph
(otherwise it contains at least one of
$C_4$, $C_5$, $C_6$, $C_7$, $K_2 \cup P_4$
as an induced subgraph and the claim is satisfied). 
In addition, we can assume that $X$ contains at most one triangle
(otherwise there is at least one of $K_4$, $D$, $H$, $K_2 \cup K_3$
as an induced subgraph or $X$ is not chordal).
We discuss two cases.

For the case when $X$ contains no triangle,
we can also assume that every component of $X$ is a path
(otherwise there is $\claw$).
We use the assumption that $X$ is not an induced subgraph of any of
$P_6$, $K_1 \cup P_5$, $2P_3$,
and we let $M$ denote a set of all vertices of a largest component of $X$
(that is, vertices of a longest path), 
and we observe the following.
If $|M| \geq 7$, then $X$ contains $K_2 \cup P_4$ as an induced subgraph.
If $|M| = 6$, then $X$ has at least two components
(since $X$ is not $P_6$),
and hence it contains induced $3K_1 \cup K_2$.
If $5 \geq |M| \geq 4$,
then $X$ has at least two vertices which do not belong to $M$
(since $X$ is not an induced subgraph of $K_1 \cup P_5$),
and thus $X$ contains $3K_1 \cup K_2$ or $K_2 \cup P_4$ as an induced subgraph.
If $|M| = 3$,
then $X$ has at least three components and at least three vertices outside $M$
(since $X$ is not an induced subgraph of $2P_3$).
It follows that $X$ contains induced $3K_1 \cup K_2$.
If $|M| = 2$, then we note that $X$ contains $3K_2$ or $3K_1 \cup K_2$
as an induced subgraph.
Lastly if $|M| = 1$, then $X$ has at least five components
and this gives $5K_1$.

For the other case, we consider the component of $X$ which contains the triangle.
We can assume that this component is a subgraph of $Z_2$
(otherwise there is $B$ or $K_2 \cup K_3$ as an induced subgraph),
and that every other component is trivial
(otherwise we have induced $K_2 \cup K_3$).
Since $X$ is not an induced subgraph of any of $Z_2$, $K_1 \cup Z_1$, $2K_1 \cup K_3$,
we conclude that $X$ contains $K_1 \cup Z_2$ or $3K_1 \cup K_2$
is an induced subgraph.

We proceed by considering the families $\FF_1, \FF_2, \FF_3, \FF_4$ of graphs 
depicted in Figure~\ref{figFamilies},
and we note that each of the graphs is $\claw$-free,
of independence number at least $4$,
and contains an induced odd hole.
Furthermore, we observe that every graph of $\FF_1$ is
$\{ C_4, C_5, C_6, C_7, K_4, D, H \}$-free,
and every graph of $\FF_2$ is
$B$-free,
and every graph of $\FF_3$ is
$\{ 5K_1, 3K_1 \cup K_2, K_1 \cup Z_2, 3K_2, K_2 \cup P_4 \}$-free,
and every graph of $\FF_4$ is
$K_2 \cup K_3$-free.
\end{proof}

\section{Concluding remarks}
\label{sec:concl}

Finally, we remark that the class of all
imperfect connected $\{\claw,B_{1,2}\}$-free graphs with 
independence number at least $4$ admits a simple characterisation.
The graph $B_{1,2}$ is depicted in Figure~\ref{figChoicesOfX}
and the characterisation is given in Theorem~\ref{thm-bull-exceptions}.
(A similar, but more technical, structural statement can be shown for 
$X$ chosen as the graph $N$ depicted in Figure~\ref{figDHBN}.)

We start by recalling the notation of an inflation of a cycle
(also used in~\cite{alpha3}).
We say that a graph is an {\it inflation of $C_k$}
if the graph can be obtained from $C_k$
by applying (in sequence) the following operation any number of times (possibly not at all).
Choose an arbitrary vertex of the graph on hand and add
a new vertex adjacent precisely to the chosen vertex and to all its neighbours.
An example of an inflation of $C_7$ is depicted in Figure~\ref{figInflation}.

\begin{figure}[ht]
\centering
\includegraphics[scale=0.7]{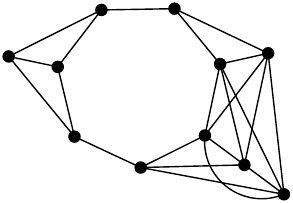}
\caption{An inflation of $C_7$.}
\label{figInflation}
\end{figure}
We show the following fact
(a similar statement considering $\{\claw,B\}$-free graphs
was shown in~\cite{alpha3}).

\begin{theorem}
\label{thm-bull-exceptions}
Let $G$ be a connected $\{\claw,B_{1,2}\}$-free graph with 
independence number at least $4$.
Then $G$ is either perfect or 
it is an inflation of $C_k$ such that $k$ is odd and $k\geq 9$.
\end{theorem}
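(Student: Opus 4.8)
The plan is to show that a connected $\{\claw, \bulld\}$-free graph $G$ with independence number at least $4$ is perfect unless it is forced into the very rigid structure of an odd inflation of a cycle $C_k$ with $k \geq 9$. The natural strategy is to reuse the machinery already developed for Theorem~\ref{t1}: since $\bulld$ does not belong to $\XX \cup \{2K_1 \cup K_3\}$, part (3) of Theorem~\ref{t1} tells us that $G$ can be imperfect, so the content here is to describe \emph{all} imperfect such $G$. First I would invoke the Strong Perfect Graph Theorem (Theorem~\ref{tSPGT}) together with Ben~Rebea's Lemma~\ref{lBR}: because $G$ is $\claw$-free with independence number at least $3$, any induced odd antihole would yield an induced $C_5$, so it suffices to analyse induced odd holes. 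Thus $G$ is either perfect, or it contains an induced odd hole $C$ of some odd length $\ell \geq 5$.

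Assuming $G$ is imperfect, I would fix an induced $C_\ell$ on a vertex set $C$ and study how the remaining vertices attach to it, exactly in the spirit of the proof of Lemma~\ref{l5}. The key local tool is Observation~\ref{o8}, which restricts $N(x) \cap C$ for any $x \notin C$ adjacent to $C$ to a short list ($K_2$, $P_3$, $P_4$, and the two special cases $C_5$ and $2K_2$). The forbidden subgraph $\bulld$ now does the decisive cutting: I would argue that if $x$ has a neighbour outside $C \cup N(C)$, or if $N(x) \cap C$ induces $P_3$ or $P_4$ or $2K_2$, then together with two suitable non-adjacent vertices of $C$ one exhibits an induced $\bulld$ (a triangle with two pendant edges) or an induced $\claw$. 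The aim is to show that the only surviving local pattern is: every vertex of $V(G) \sm C$ is adjacent to $C$, its neighbourhood on $C$ induces exactly a $K_2$ (a consecutive pair), and moreover such a vertex is adjacent to \emph{all} neighbours on $C$ of that pair — i.e.\ it is a twin-type vertex that duplicates an edge of the cycle. This is precisely the inflation operation defined just before the theorem.

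Having established that each external vertex duplicates a vertex of $C$ (equivalently, is adjacent exactly to some $C$-vertex and all its neighbours), I would then propagate this inductively to vertices added relative to earlier external vertices, showing by connectivity and $\claw$-freeness that the global structure is an inflation of $C_\ell$. Independence number at least $4$ rules out $\ell = 5$ and $\ell = 7$ (an inflation of $C_5$ has independence number $2$, and one of $C_7$ has independence number $3$), forcing $\ell \geq 9$; since $\ell$ is the length of an odd hole, $\ell$ is odd. Conversely, every inflation of an odd $C_k$ with $k \geq 9$ is $\{\claw,\bulld\}$-free, connected, of independence number at least $4$, and imperfect (it contains the induced odd hole $C_k$), which closes the characterisation in both directions.

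The main obstacle I anticipate is the inductive propagation step: ruling out the special cases of Observation~\ref{o8} for a single vertex relative to the original hole is a finite case check, but one must verify that the ``duplicate an edge'' structure is preserved as further vertices are attached, and in particular that no external vertex can see two non-adjacent duplicated edges or create a chord. Handling a vertex whose $C$-neighbourhood could a priori induce $2K_2$ (when $\ell \geq 6$) is the most delicate point, since $2K_2$ on the hole together with an external vertex is exactly the kind of configuration that can hide an induced $\bulld$ or force a shorter hole; I would expect to dispatch it by exhibiting the forbidden subgraph directly, and to lean on the minimality/connectivity arguments already used in Lemmas~\ref{l5} and~\ref{l6} to keep the bookkeeping finite.
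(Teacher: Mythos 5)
Your high-level strategy coincides with the paper's: use Theorem~\ref{tSPGT} and Lemma~\ref{lBR} to reduce imperfectness to the existence of an induced odd hole, and then show that all vertices outside such a hole attach in the unique way that makes $G$ an inflation of the hole (the paper packages this second step as Observation~\ref{prop-bull-cycle-class_C}). However, your outline has a genuine gap at the case of an odd hole of length~$5$, and this is precisely the case your plan does not flag as problematic. For $\ell=5$ the attachment analysis is \emph{not} a local finite check: a vertex adjacent to exactly two consecutive vertices of an induced $C_5$, or to all five of them (the $5$-wheel), creates neither an induced $\claw$ nor an induced $\bulld$ --- any induced $\bulld$ needs six vertices, and the six vertices available form a graph containing $C_5$, hence not $\bulld$. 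Such graphs therefore exist and are not inflations of $C_5$; they merely happen to have independence number at most~$3$. Consequently you cannot first prove ``$G$ is an inflation of $C_5$'' by local analysis and only afterwards invoke $\alpha(G)\geq 4$ to get a contradiction: the independence hypothesis must enter the exclusion of $C_5$ globally. This is exactly what the paper's Lemma~\ref{l5}(2) does, via a minimal counterexample and a counting argument over a maximum independent set against the graphs $H_2,\ldots,H_7$ (each of which contains an induced $\bulld$), and it is the reason the paper's proof of the theorem invokes Lemma~\ref{l5} at all. Your plan singles out the $2K_2$-attachment (possible only for $\ell\geq 6$) as the delicate point, but that case is routine; it is the $C_5$ case that your outline would not survive. (For $\ell=7$ your route is fine: the attachment analysis works for $\ell\geq 7$, and an inflation of $C_7$ has independence number~$3$.)

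A second, more local error: you misstate the forced attachment pattern. You claim the surviving configuration is a neighbourhood on $C$ inducing $K_2$ while the vertex is also adjacent to all $C$-neighbours of that pair --- this is internally inconsistent (adjacency to the pair and to its neighbours yields a $P_4$-neighbourhood, not $K_2$), and in fact for $\ell\geq 7$ the $K_2$- and $P_4$-attachments are exactly the ones that $\bulld$-freeness kills (take the triangle on $x$ and an attaching edge, a pendant edge on one side along the cycle, and a pendant $P_3$ on the other). The pattern that survives is $P_3$: the new vertex is adjacent to a cycle vertex and both of its neighbours, i.e.\ it duplicates a \emph{vertex}, which is the inflation operation as defined in the paper. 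As written, your case analysis would be aiming at a false target, so even the $\ell\geq 9$ part of the argument would need to be redone with the correct pattern before the propagation step makes sense.
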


\begin{proof}
We note that each of the graphs 
$H_1,\ldots,H_7$ (depiced in Figure~\ref{figH}) contains an induced $B_{1,2}$,
and so $G$ is $\{ C_5, C_7 \}$-free by Lemma~\ref{l5}.
Thus, $G$ contains no induced odd antihole by Lemma~\ref{lBR}.
We conclude that the statement follows by the combination of 
Theorem~\ref{tSPGT} and Observation~\ref{prop-bull-cycle-class_C}
(stated below).
\end{proof}
We let $B_{i,j}$ denote the graph obtained by identifying 
end-vertices of two vertex-disjoint paths $P_{i+1}$, $P_{j+1}$ (one end of each)
with two distinct vertices of a triangle
(for instance, see the graph $B_{1,2}$ depicted in Figure~\ref{figChoicesOfX}).
We show the following structural
observation on $\{\claw,B_{1,p}\}$-free graphs.
(The argument goes along similar lines as in
the proof of~\cite[Lemma 4.2]{alpha3}, where 
$\{\claw,\bullp\}$-free 
graphs and $k \geq 5$ were considered.)

\begin{obs}
\label{prop-bull-cycle-class_C}
Let $p$ be an integer greater than $1$ and $G$ be a connected $\{\claw,\bullp\}$-free 
graph which contains an induced $C_k$ such that $k \geq 2p+3$. 
Then $G$ is an inflation of $C_k$.
\end{obs}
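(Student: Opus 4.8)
The plan is to equip $G$ with its natural ``blown-up cycle'' decomposition and then recognise that decomposition as an inflation. Write $C = c_1 c_2 \cdots c_k$ for the induced cycle, with indices mod $k$, and recall $k \geq 2p+3 \geq 7$. The first step is to analyse how a single vertex attaches to $C$ and prove: \emph{every vertex $x \notin C$ with a neighbour on $C$ satisfies that $N(x)\cap C$ consists of three consecutive vertices $c_{i-1},c_i,c_{i+1}$.} By Observation~\ref{o8} (and $k\geq 7$), $N(x)\cap C$ induces one of $K_2$, $P_3$, $P_4$, $2K_2$, so it suffices to exclude the three cases other than $P_3$ using $\bullp$-freeness. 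For $K_2$, say $N(x)\cap C=\{c_1,c_2\}$, I take the triangle $x c_1 c_2$, use $c_k$ as the length-$1$ branch at $c_1$, and grow the length-$p$ branch forward as $c_2 c_3 \cdots c_{p+2}$; the bound $k\geq 2p+3$ keeps this path disjoint from $c_k$ and from $N(x)$, giving an induced $\bullp$.

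The cases $P_4$ and $2K_2$ are the delicate ones, since there both cycle-neighbours of each available triangle vertex are themselves adjacent to $x$, so no cycle vertex can play the short branch in the obvious way. The key device is to attach the short branch to $x$ \emph{itself}: pick a neighbour of $x$ on $C$ lying outside the chosen triangle (for $P_4$ the vertex $c_4$; for $2K_2$ an endpoint $c_a$ or $c_{a+1}$ of the second edge), which is adjacent to $x$ but to neither other triangle vertex, and route the long branch along the majority arc of $C$ cut out by $N(x)$. As the two such arcs contain $k-4\geq 2p-1$ vertices in total, one of them carries a clean path on $p$ new vertices, again producing an induced $\bullp$. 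This establishes the neighbourhood claim. Next, every vertex lies in $C$ or is adjacent to $C$: if some $w$ had no neighbour on $C$, connectivity yields a neighbour $u\notin C$ of $w$ adjacent to $C$, and then $N(u)\cap C=\{c_{j-1},c_j,c_{j+1}\}$ makes $\{u; w, c_{j-1}, c_{j+1}\}$ an induced $\claw$. Hence I may partition $V(G)$ by position, setting $V_i=\{c_i\}\cup\{x\notin C : N(x)\cap C=\{c_{i-1},c_i,c_{i+1}\}\}$, which is well defined since a $P_3$ on $C$ has a unique centre.

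It remains to show that $(V_1,\dots,V_k)$ is a blown-up cycle: each $V_i$ is a clique, $V_i$ is complete to $V_{i+1}$, and $V_i$ is anticomplete to $V_j$ for $|i-j|\geq 2$. Two of these are short. If $x,x'\in V_i$ were non-adjacent, then $\{c_{i-1}; c_{i-2}, x, x'\}$ is an induced $\claw$ (both $x,x'$ miss $c_{i-2}$); and if $x\in V_i$, $y\in V_j$ with $|i-j|\geq 3$ were adjacent, then $y$ misses $c_{i-1}$ and $c_{i+1}$ and $\{x; c_{i-1}, c_{i+1}, y\}$ is an induced $\claw$. The two remaining relations genuinely need $\bullp$. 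For completeness of $V_i$ to $V_{i+1}$, a non-edge $xy$ with $x\in V_i$, $y\in V_{i+1}$ yields the triangle $y\,c_{i+1}\,c_{i+2}$ with short branch $x$ at $c_{i+1}$ (here $x\sim c_{i+1}$ but $x\not\sim y,c_{i+2}$) and long branch $c_{i+2}c_{i+3}\cdots$ forward along $C$. For anticompleteness at distance $2$, an edge $xy$ with $x\in V_i$, $y\in V_{i+2}$ gives the triangle $x\,y\,c_{i+1}$ with short branch $c_{i-1}$ at $x$ and long branch from $y$ along $c_{i+3}c_{i+4}\cdots$. In each case $k\geq 2p+3$ is exactly what lets the $P_{p+1}$-branch fit without wrapping into a forbidden neighbour.

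Finally, any graph carrying such a decomposition into non-empty cliques arranged cyclically is an inflation of $C_k$: arguing by induction on $|V(G)|-k$, remove a non-cycle vertex $x$ from a part $V_i$ of size at least $2$; then $N_G(x)=(V_i\sm\{x\})\cup V_{i-1}\cup V_{i+1}$, so $x$ is a true twin of $c_i$ in $G-x$ and $G$ arises from the inflation $G-x$ by one cloning step, while the base case of all parts singleton is $C_k$ itself. I expect the main obstacle to be the neighbourhood claim for $P_4$ and $2K_2$ together with the completeness relation $V_i$--$V_{i+1}$: in all three, claw-freeness alone is insufficient, and the explicit $\bullp$-constructions succeed only because of the attach-the-short-branch-to-$x$ idea and the length bound $k\geq 2p+3$.
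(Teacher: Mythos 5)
Your proof is correct and follows essentially the same route as the paper's: both establish via Observation~\ref{o8} and $\bullp$-freeness that every outside vertex attaches to $C$ along an induced $P_3$, deduce that every vertex attaches to $C$, and then determine adjacency between outside vertices by the position of their attachment (your criterion $|i-j|\leq 1$ is exactly the paper's criterion of having at least two common neighbours on $C$, checked there by the case analysis $c=0,1,2,3$). You merely spell out explicitly the induced $\claw$/$\bullp$ configurations that the paper delegates to its figures, and you formalise the final ``hence $G$ is an inflation of $C_k$'' step by an induction the paper leaves implicit.
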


\begin{proof} 
We consider a set $C$ inducing $C_k$ in $G$.
Clearly, we can assume that there is a vertex, say $x$, of $V(G)\setminus C$ 
(otherwise, the statement is satisfied trivially),
and that $x$ is adjacent to a vertex of $C$ 
(since $G$ is connected).

We note that $N(x)\cap C$ cannot induce any of the graphs 
$K_2$, $P_4$, $2K_2$ (since $G$ is $\bullp$-free),
and thus it induces $P_3$ (by Observation~\ref{o8}).
In particular, we get that every vertex of $G$ is adjacent to at least one vertex of $C$
(since $G$ is connected and $\claw$-free).

We consider a pair of vertices, say $x$ and $y$, of $V(G)\setminus C$,
and let $c$ be the number of their common neighbours in $C$.
Using that $G$ is $\{\claw,\bullp\}$-free, we discuss the cases
given by $c = 0,1,2,3$ (see Figure~\ref{figXy}),
and we conclude that $x$ and $y$ are adjacent if and only if
$c \geq 2$.
It follows that $G$ is an inflation of $C_k$.
\end{proof}

\begin{figure}[ht]
\centering
\includegraphics[scale=0.7]{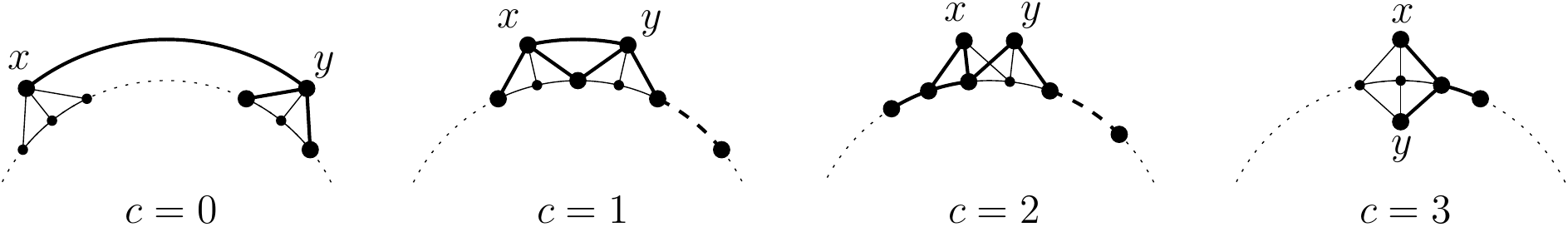}
    \caption{Adjacencies of $x$ and $y$ and $C$ giving one of the forbidden subgraphs
	(induced copies of $\claw$ and $\bullp$ are depicted in bold).}
    \label{figXy}
\end{figure}

\section*{Acknowledgements}
We thank the anonymous referees for their helpful remarks and suggestions.
The research was partly supported by the DAAD-PPP project
`Colourings and connection in graphs'
with project-ID 57210296 (German) and 7AMB16DE001 (Czech), respectively.
The research of the third, fourth, fifth and seventh author was also partly supported by project
GA20-09525S of the Czech Science Foundation.


\begin{thebibliography}{9}
%
\bibitem{BR} A. Ben Rebea:
\'{E}tude des stables dans les graphes quasi-adjoints, Th\`{e}se,
Universit\'{e} de Grenoble, France, 1981.
%
\bibitem{BM08} J. A. Bondy, U.\,S.\,R. Murty: Graph Theory,
Springer, 2008.
%
\bibitem{alpha3}
C. Brause, P. Holub, A. Kabela, Z. Ryj{\'a}\v{c}ek, I. Schiermeyer, P.~Vr{\'a}na:
On forbidden induced subgraphs for $\claw$-free perfect graphs,
Discrete Mathematics 342 (2019), 1602--1608.
%
\bibitem{ChRST06}
M. Chudnovsky, N. Robertson, P. Seymour, R. Thomas: 
The strong perfect graph theorem,
Annals of Mathematics 164 (2006), 51--229.
%
\bibitem{CS}
M. Chudnovsky and P. Seymour:
Claw-free graphs VI. Colouring,
Journal of Combinatorial Theory, Series B 100 (2010), 560--572.
%
\bibitem{ChS88}
V. Chv\'atal, N. Sbihi: Recognizing claw-free perfect graphs,
Journal of Combinatorial Theory, Series B 44 (1988), 154--176.
%
\bibitem{Fo93}
J. L. Fouquet: A strengthening of Ben Rebea's lemma, 
Journal of Combinatorial Theory, Series B 59 (1993), 35--40.
%
\bibitem{GJPS17}
P. A. Golovach, M. Johnson, D. Paulusma, J. Song:
A survey on the computational complexity of coloring graphs with forbidden subgraphs,
Journal of Graph Theory 84 (2017), 331--363. 
%
\bibitem{Kim}
J. H. Kim:
The Ramsey number $R(3,t)$ has order of magnitude $\frac{t^2}{log t}$, 
Random Structures and Algorithms 7 (1995), 173--207.
%
\bibitem{RS04}
B. Randerath, I. Schiermeyer:
Vertex colouring and forbidden subgraphs - a survey,
Graphs and Combinatorics 20 (2004), 1--40. 
%
\bibitem{RS19}
B. Randerath, I. Schiermeyer: 
Polynomial $\chi$-binding functions and forbidden induced subgraphs: a survey,
Graphs and Combinatorics 35 (2019), 1--31.
%
\end{thebibliography}
\end{document}